\providecommand{\U}[1]{\protect\rule{.1in}{.1in}}
\newcommand{\BG}{{\mathbb {G}}}
\newcommand{\RR}{{\mathrm {R}}}
\newcommand{\Br}{{\mathrm{Br}}}
\newcommand{\inv}{{\mathrm{inv}}}
\newcommand{\Ker}{{\mathrm{Ker}}}
\newcommand{\Pic}{\mathrm{Pic}}
\renewcommand{\mod}{\ \mathrm{mod}\ }
\newcommand{\et}{{\operatorname{\acute{e}t}}}
\numberwithin{equation}{section}
\theoremstyle{plain}
\newtheorem{theorem}{Theorem}[section]
\newtheorem{lemma}[theorem]{Lemma}
\newtheorem{proposition}[theorem]{Proposition}
\theoremstyle{definition}
\newtheorem{remark}[theorem]{Remark}
\newtheorem{exam}{Example}
\begin{document}

\title[Integral points on twisted Markoff surfaces] {Integral points on twisted Markoff surfaces}

\author{Sheng Chen}
\address{School of Mathematical Science \\ Capital Normal University \\ Beijing 100048, China}
\email{chenshen1991@163.com}

\begin{abstract}
  We study the integral Hasse principle for affine varieties of the form
$$
  ax^{2}+y^{2}+z^{2}-xyz=m
$$
using Brauer-Manin obstruction, and we produce examples whose Brauer groups include $4$-torsion elements .
We use methods of [5] to describe them and in some cases we show that there is no
Brauer-Manin obstruction to the integral Hasse principle for them.

\end{abstract}

\maketitle
\section{Introduction}

In recent papers [4] and [10], Colliot-Th\'el\`ene , Wei, Xu, D. Loughran and V. Mitankin, studied the integral Hasse principle
and strong approximation for Markoff surfaces, using the Brauer-Manin obstruction.
 For Markoff surfaces , D. Loughran and V. Mitankin obtained the following beautiful result :\\

Assume that $m \in \Bbb Z$ is such that affine surface $\mathcal{U}_{m}$ defined by
$$x^2+y^2+z^2-xyz=m.$$
has a Brauer-Manin obstruction to the integral Hasse principle. Then
$$ m-4 \mod Q^{\times^2} \in \langle \pm 1,2,3,5\rangle \subset Q^{\times}/Q^{\times^2}.$$
As they pointed out , this can be seen as an analogue of the finiteness of exceptional spinor classes in the study of the representation of an integer by a
ternary quadratic form (see [3 ,\S 7]).\\

Now, fix $ m, a\in \Bbb Z ,m \ne 0 ,4a$. Let $U \subset {\Bbb A}^3_{\Bbb Z}$
be the affine scheme over $\Bbb Z$ defined by the equation
$$ax^2+y^2+z^2-xyz=m.$$
We study the Brauer-Manin obstruction to the integral Hasse principle for U.
In particular, we have similar results :
\begin{theorem}
Assume that  $[Q(\sqrt{a},\sqrt{m},\sqrt{m-4a}):Q]=8$, let $(a,m)=p_1^{n_1}p_2^{n_2} \cdots p_s^{n_s}$, where $(a,m)$ is the greatest common divisor of a and m, $p_i$ are prime for $1 \le i \leq s$ .
If there is a Brauer-Manin obstruction to the integral Hasse principle for U, we have
$$ m-4a \mod Q^{\times^2} \in \langle \pm 1,2,3,\{p_i\}_{1 \le i \leq s} \rangle \subset Q^{\times}/Q^{\times^2}.$$
\end{theorem}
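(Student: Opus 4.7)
The strategy follows the methods of [5] and [10]: (1) produce explicit representatives of the relevant classes in $\Br(U)/\Br(\Bbb Q)$ by completing the square in the defining equation; (2) bound the primes at which these classes can have nontrivial local invariants on $U(\Bbb Z_p)$; and (3) invoke global reciprocity to translate this bound into the stated constraint on $m-4a$ modulo squares.

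\textbf{Step 1: Producing Brauer classes.} Completing the square in $y$ with $x,z$ fixed yields
\[
(2y - xz)^2 - z^2(x^2 - 4) = 4(m - ax^2),
\]
so the quaternion algebra $\alpha := (x^2-4,\, m-ax^2)$ is split over the function field of $U$ and represents a class in $\Br(U)$. Completing the square in $x$ gives the companion identity
\[
(2ax - yz)^2 - y^2(z^2 - 4a) = 4a(m - z^2),
\]
producing $\beta := (z^2 - 4a,\, a(m - z^2)) \in \Br(U)$, plus a symmetric class in $y$. Under the hypothesis $[\Bbb Q(\sqrt{a},\sqrt{m},\sqrt{m-4a}):\Bbb Q]=8$, the elements $a$, $m$, $m-4a$ are multiplicatively independent in $\Bbb Q^\times/\Bbb Q^{\times 2}$, and I would compute residues along the divisors $\{x=\pm 2\}$, $\{y^2=4a\}$, $\{z^2=4a\}$ to certify nontriviality of $\alpha$, $\beta$ in $\Br(U)/\Br(\Bbb Q)$. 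This is also where the $4$-torsion classes mentioned in the abstract enter: following [5] one constructs cyclic degree-$4$ algebras by corestriction from $\Bbb Q(\sqrt{m-4a})$, and the linear disjointness encoded in the degree-$8$ hypothesis guarantees these do not collapse onto the quaternion subclasses.

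\textbf{Step 2: Local analysis.} Let $\mathcal{A}\in\Br(U)$ be a class producing the obstruction. For each prime $p \notin S := \{2,3\}\cup\{p_1,\dots,p_s\}$ I would show that $\mathrm{inv}_p(\mathcal{A}(P_p))=0$ for every $P_p\in U(\Bbb Z_p)$. At primes of good reduction the invariant is locally constant on residue classes, so the task reduces to showing triviality of $\mathcal{A}$ at each $\Bbb F_p$-point; a Chevalley--Warning/Hensel argument produces in each residue class an integral point at which one of the entries of the symbol representing $\mathcal{A}$ is a $p$-adic unit square. The delicate primes are those dividing $a$ or $m$ but not their gcd; here $p\notin\{2,3\}$ ensures enough flexibility in the mod-$p$ fiber to still exhibit such a point.

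\textbf{Step 3: Global reciprocity.} Combining Step 2 with $\sum_v \mathrm{inv}_v(\mathcal{A}(P_v))=0$ shows that the Brauer--Manin pairing is determined entirely by the contributions at places in $S\cup\{\infty\}$. Specializing $x=2$ in $\alpha$ exhibits the residue along $\{x=2\}$ as the class of $m-4a$ in $\Bbb Q^\times/\Bbb Q^{\times 2}$, since $\{x=2\}\cap U$ is cut out by $(y-z)^2 = m-4a$. For $\mathcal{A}$ to capture a genuine obstruction rather than be forced to vanish by reciprocity, the square class of $m-4a$ must be supported only at primes in $S\cup\{\infty, 2\}$, which yields $m-4a \bmod \Bbb Q^{\times 2}\in\langle\pm 1,2,3,p_1,\dots,p_s\rangle$; the sign and the prime $2$ absorb the archimedean and $2$-adic contributions, and $3$ enters via the Chevalley--Warning step.

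\textbf{Main obstacle.} The hardest part is Step 1 together with the $4$-torsion bookkeeping in Step 2: writing down quaternion classes from the completion-of-the-square is straightforward, but realising the degree-$4$ cyclic classes promised in the abstract, certifying that they remain nontrivial in $\Br(U)/\Br(\Bbb Q)$, and then verifying that their local invariants are still supported only at $S$ requires the full strength of the hypothesis $[\Bbb Q(\sqrt a,\sqrt m,\sqrt{m-4a}):\Bbb Q]=8$. Without this linear disjointness the corestriction construction of [5] degenerates, and both the nontriviality of the $4$-torsion class and the sharp control on residues break down.
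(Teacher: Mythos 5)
There are two genuine gaps here, one in your construction of the Brauer classes and one in the local analysis, and they are exactly where the content of the theorem lives. First, the classes you write down in Step~1 are trivial. Your identity $(2y-xz)^2 - z^2(x^2-4) = 4(m-ax^2)$ exhibits $m-ax^2$ (up to squares) as a norm from $k(U)(\sqrt{x^2-4})$, so $\alpha=(x^2-4,\,m-ax^2)$ is \emph{zero} in $\Br(k(U))$ --- indeed the paper records precisely this symbol as the kernel of the conic-bundle pullback $\pi_\eta^*$ in the proof of Proposition 2.3; the same computation kills your $\beta$. The correct generators come from the other completion of the square, $(2z-xy)^2-4(m-4a)=(x^2-4)(y^2-4a)$, namely $(x\pm 2,\,m-4a)$, and Proposition 2.3 shows that under the degree-$8$ hypothesis $\Br_1(U)/\Br_0(U)\cong \mathbb{Z}/2\oplus\mathbb{Z}/2$ with exactly these two generators. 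In particular the $4$-torsion you devote your ``main obstacle'' paragraph to does not occur here: it arises only in the degree-$4$ case $[k(\sqrt a,\sqrt m,\sqrt{m-4a}):k]=4$ with $\sqrt{m-4a}/\sqrt{ma}\in k$ treated in Section 5, and plays no role in Theorem 1.1. The degree-$8$ hypothesis is used to make the Brauer group \emph{small} and $2$-torsion, not to keep a corestricted degree-$4$ class alive.

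Second, your Step~2 claim --- that $\mathrm{inv}_p\mathcal{A}(P_p)=0$ for every $P_p\in U(\mathbb{Z}_p)$ whenever $p\notin\{2,3,p_1,\dots,p_s\}$ --- is false, and the deduction in Step~3 does not go through without the idea that replaces it. At a prime $p\ge 5$ with $p\nmid a$ and $\mathrm{ord}_p(m-4a)$ odd, the classes $(x\pm2,m-4a)$ genuinely take the value $1/2$ at some integral points (e.g.\ points where $\mathrm{ord}_p(x\pm2)$ is odd). What the paper proves instead (Proposition 3.4, following [10]) is that at such a prime the \emph{joint} evaluation map $U(\mathbb{Z}_p)\to(\tfrac12\mathbb{Z}/\mathbb{Z})^2$ is \emph{surjective}: four explicit smooth $\mathbb{F}_p$-points, chosen according to whether $a$ is a square mod $p$ and lifted by Hensel, realise all four invariant pairs. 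Surjectivity at a single place lets one repair any adelic point so that both reciprocity sums vanish simultaneously, so there is no obstruction from the whole group $(\mathbb{Z}/2)^2$; merely knowing that \emph{some} point has trivial invariant, as your Chevalley--Warning step would give, is not enough. The theorem is then the contrapositive: an obstruction forces $\mathrm{ord}_p(m-4a)$ even for every $p\ge5$ with $p\nmid a$, while a prime $p\ge 5$ dividing $a$ either divides $\gcd(a,m)$ or satisfies $p\nmid m-4a$; hence $m-4a$ modulo squares is supported on $\{-1,2,3,p_1,\dots,p_s\}$. Your appeal to ``reciprocity concentrating the pairing at $S$'' gives no constraint on the square class of $m-4a$ by itself.
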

Moreover, we will give examples whose Brauer groups include $4$-torsion elements,
and with some assumptions, we can show that there is no Brauer-Manin obstruction to the integral Hasse principle for them.
\\
 As noted in [11], an often used strategy for proving that a class $\mathcal A \in \Br(U)$ of order n gives no obstruction to the Hasse principle is to demonstrate the
existence of a finite place v of k such that the evaluation map $X(k_v) \to (\Br k_v)[n]$, sending a
point $P \in Y(k_v)$ to the evaluation $\mathcal A(P) \in (\Br k_v)[n]$, is surjective .However,the local invariant of non-cyclic algebra is difficult to compute in general. Based on ideas of [5],
we will construct explicit representatives of non-cyclic Brauer classes on affine surfaces, and compute its local invariants
in special places.

\bigskip

{\bf Notation} Let $k$ be a field and $\overline k$ a separable closure of $k$.
If $X$ is a k-variety,we write $\overline{X}=X \times_{k} \overline{k}$.
If $X$ is an integral $k$-variety, we let $k(X)$ denote the function field of $X$.
If $X$ is a geometrically integral $k$-variety, we let $\overline{k}(X)$ denote
the function field of $\overline{X}$.
We  let $\Pic(X)=
H^1_{\et}(X,\BG_{m})$ denote the Picard group of a scheme $X$. We let
$\Br(X)=H^2_{\et}(X,\BG_{m})$ denote the Brauer group of  a scheme $X$.
If $X$ is a regular integral $k$-variety, the natural map $$\Br(X) \to \Br(k(X))$$
is injective. We let $$\Br_{1}(X) = \Ker [\Br(X) \to \Br({\overline X})]$$ denote the algebraic Brauer group of a k-variety.

\section{Algebraic Brauer group of cubic surface}
Follow J.-L.Colliot-Th\'el\`ene,Dasheng Wei,and Fei Xu, we have

\begin{lemma}
  Let $ X   \subset \mathbb{P}_k^3$ defined by equation
$$ ax^{2}t+y^{2}t+z^{2}t-xyz=mt^{3}$$
over a field k of characteristic zero with $a\in k^{\times},a\notin k^2, $ then X is smooth if and only if $m\ne0 ,4a$.In this case,the 27 lines in X are defined over k($\sqrt{a},\sqrt{m},\sqrt{m-4a}$) by the following equations:
$$H_1:x=t=0; \qquad H_2:y=t=0 ;\qquad H_3:z=t=0$$ \\
and
$$
\left\{\begin{array}{ccc}
l_1(\varepsilon,\delta):& x=2\varepsilon t,& y-\varepsilon z=\delta \sqrt{m-4a}t \\
l_2(\varepsilon,\delta):& y=2\varepsilon \sqrt{a}t,&z-\varepsilon \sqrt{a}x=\delta \sqrt{m-4a}t\\
l_3(\varepsilon,\delta):& z=2\varepsilon \sqrt{a}t,&\sqrt{a}x-\varepsilon y=\delta \sqrt{m-4a}t\\
l_4(\varepsilon,\delta):& \sqrt{a}x=\varepsilon \sqrt{m}t,&\sqrt{a}y=\frac{1}{2} (\varepsilon \sqrt{m}+\delta\sqrt{m-4a})z\\
l_5(\varepsilon,\delta):& y=\varepsilon \sqrt{m}t,&z=\frac{1}{2}(\varepsilon \sqrt{m}+\delta\sqrt{m-4a})x\\
l_6(\varepsilon,\delta):& z=\varepsilon \sqrt{m}t,&x=\frac{1}{2a}(\varepsilon \sqrt{m}+\delta\sqrt{m-4a})y\\
\end{array}
\right
.$$

with\quad $\varepsilon=\pm1$, and $\delta=\pm1$.Moreover the intersection number\\
$$ (l_i(1,1),l_j(1,1))=0 \qquad whenever \ \ 1 \le i\neq j\leq6. $$
\end{lemma}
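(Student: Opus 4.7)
The strategy is to handle the three assertions—smoothness, explicit enumeration of the 27 lines, and the pairwise-skew property of the chosen six—separately.

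\textbf{Smoothness.} I would apply the Jacobian criterion to $F = ax^2t + y^2t + z^2t - xyz - mt^3$. The partial derivatives are
\[
F_x = 2axt - yz,\quad F_y = 2yt - xz,\quad F_z = 2zt - xy,\quad F_t = ax^2 + y^2 + z^2 - 3mt^2.
\]
If $t = 0$ at a singular point, then $F_x = F_y = F_z = 0$ forces two of $x,y,z$ to vanish, and $F_t = 0$ with $a \neq 0$ kills the third—impossible in $\mathbb{P}^3$. On the affine chart $t = 1$, the simultaneous vanishing of the four partials splits into the case $y = z = 0$ (which forces $x = 0$, the point $(0{:}0{:}0{:}1)$ lying on $X$ iff $m = 0$) and the case $x^2 = 4$, $y^2 = z^2 = 4a$, $yz = 2ax$, for which the equation $F = 0$ reduces to $m = 4a$. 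Conversely these explicit points realize the singularities, giving both implications.

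\textbf{The 27 lines.} The three lines $H_i$ fall out of $F|_{t=0} = -xyz$. To locate the rest, I would intersect $X$ with the six families of planes
\[
x = 2\varepsilon t,\ y = 2\varepsilon\sqrt{a}\,t,\ z = 2\varepsilon\sqrt{a}\,t,\ \sqrt{a}\,x = \varepsilon\sqrt{m}\,t,\ y = \varepsilon\sqrt{m}\,t,\ z = \varepsilon\sqrt{m}\,t
\]
($\varepsilon = \pm 1$), each chosen so that the plane cuts $X$ in a union of three lines. For example, setting $x = 2\varepsilon t$ gives
\[
F|_{x = 2\varepsilon t} = t\bigl[(y - \varepsilon z)^2 - (m-4a)t^2\bigr],
\]
which factors over $k(\sqrt{m-4a})$ into $\{t = 0\}$ together with the two lines $l_1(\varepsilon, \pm 1)$. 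Analogous factorizations—completing the square in the residual binary quadratic, with $\sqrt{a}$ or $\sqrt{m}$ appearing according to the family—produce $l_2, \ldots, l_6$. This yields $3 + 6 \cdot 4 = 27$ lines, exactly the count for any smooth cubic surface, so the list is complete.

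\textbf{Intersection numbers.} On a smooth cubic surface two distinct lines are either skew (intersection number $0$) or meet in a single point. For each of the $\binom{6}{2} = 15$ pairs $\{l_i(1,1), l_j(1,1)\}$ I would parametrize one line by a single affine parameter, substitute into the defining equations of the other, and verify that the resulting linear system admits no solution in $\mathbb{P}^3$. The main—and purely bookkeeping—obstacle is organizing these fifteen checks uniformly; the essential algebraic inputs are that $\sqrt{a} \notin k$ and $m(m-4a) \neq 0$, which guarantee that $\sqrt{m} \pm \sqrt{m-4a}$ and $2\sqrt{a} \pm \sqrt{m-4a}$ are all nonzero, forcing every candidate common point to collapse to $(0{:}0{:}0{:}0)$ and thereby establishing skewness of each pair.
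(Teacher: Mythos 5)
Your proposal is correct and follows the only natural route: the paper itself gives no details, dismissing the lemma with ``the results follow from straightforward computation,'' and your Jacobian analysis, the tritangent-plane factorizations such as $F|_{x=2\varepsilon t}=t[(y-\varepsilon z)^2-(m-4a)t^2]$, and the pairwise incidence checks are exactly that computation written out. The individual steps check out (e.g.\ the singular points in the chart $t=1$ do reduce $F$ to $4a-m$, and each candidate common point of $l_i(1,1)$ and $l_j(1,1)$ forces $t=0$ and hence all coordinates to vanish), so there is nothing to add beyond noting that the $24$ affine lines are pairwise distinct, which your intersection-number verification already implies.
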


\begin{proof}
The results follow from straight forward computation.
\end{proof}.\\

\begin{proposition}
Let $ X   \subset \mathbb{P}_k^3$ defined by equation
$$ ax^{2}t+y^{2}t+z^{2}t-xyz=mt^{3}$$
over a field k of characteristic zero with $a\in k^{\times},a\notin k^2$,and  $m\ne0 ,4a$.Then $\Br(X)/\Br_0(X)=0$ or $ \Br(X)/\Br_0(X)\cong \mathbb{Z}/2 $ with generator $((\frac{x}{t})^2-4,m-4a)$.
\end{proposition}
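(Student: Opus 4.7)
The plan is to pass to the algebraic Brauer group via Hochschild--Serre, compute $H^1$ of the Galois action on the Picard lattice of $\bar X$, and then exhibit the generator via a divisor computation. Since $\bar X$ is a smooth projective geometrically rational surface (a smooth cubic surface by Lemma~2.1), we have $\Br(\bar X)=0$ and hence $\Br(X)=\Br_1(X)$, while the Hochschild--Serre spectral sequence yields an injection
$$\Br(X)/\Br_0(X)\hookrightarrow H^1\bigl(\Gal(L/k),\Pic(X_L)\bigr),$$
where $L=k(\sqrt a,\sqrt m,\sqrt{m-4a})$ is the splitting field of the 27 lines in Lemma~2.1 and $G:=\Gal(L/k)$ is a subgroup of $(\BZ/2)^3$.

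By the vanishing $(l_i(1,1),l_j(1,1))=0$ for $1\le i\neq j\le 6$ in Lemma~2.1, the six lines $l_1(1,1),\ldots,l_6(1,1)$ are pairwise skew, so together with the hyperplane class they form a $\BZ$-basis of $\Pic(X_L)\cong\BZ^7$. I would then read off the permutation action of the generators of $G$ -- the three involutions $\sigma_a,\sigma_m,\sigma_{m-4a}$ flipping the signs of $\sqrt a,\sqrt m,\sqrt{m-4a}$ -- directly from the equations of the 27 lines. For example, $\sigma_{m-4a}$ fixes each $H_i$ and sends $l_i(\varepsilon,\delta)\mapsto l_i(\varepsilon,-\delta)$ for $i=1,2,3$, while on $l_4,l_5,l_6$ the involutions $\sigma_m$ and $\sigma_{m-4a}$ interact nontrivially through the combination $\tfrac12(\varepsilon\sqrt m+\delta\sqrt{m-4a})$. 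Writing the action as integer matrices in the basis above, $H^1(G,\Pic(X_L))$ is computed by a case analysis on the possible subgroups of $(\BZ/2)^3$ (depending on which of $a,m,m-4a$ are independent modulo $k^{\times 2}$); in every subcase the group should turn out to be $0$ or $\BZ/2$, either by a direct cocycle/coboundary check or by appealing to the known classification of $H^1$ for subgroups of $W(E_6)$ acting on the cubic-surface lattice.

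To identify the generator when nontrivial, substituting $x=\pm 2t$ into the equation of $X$ gives $t\bigl((y\mp z)^2-(m-4a)t^2\bigr)=0$, from which
$$\div\bigl((x/t)^2-4\bigr)=\sum_{\varepsilon,\delta=\pm 1}l_1(\varepsilon,\delta)-2H_2-2H_3.$$
A residue check at each prime divisor of $X$ then shows that the quaternion symbol $\bigl((x/t)^2-4,\,m-4a\bigr)$ is unramified: along the lines $l_1(\varepsilon,\delta)$ the second entry $m-4a$ is a square in the residue field (which contains $\sqrt{m-4a}$), and along $H_2,H_3$ the valuation of $(x/t)^2-4$ is even, so the tame symbol is trivial. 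Hence this class lies in $\Br(X)$, and matching its image in $H^1$ against the cocycle computed above identifies it with the generator. The main obstacle will be the cohomology step: carefully bookkeeping the simultaneous action of $\sigma_m$ and $\sigma_{m-4a}$ on $l_4,l_5,l_6$ in the chosen skew basis, and uniformly verifying the bound $|H^1(G,\Pic(X_L))|\le 2$ across all collapses of $G$.
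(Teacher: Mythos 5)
Your overall route is the same as the paper's: reduce to $H^1(\Gal(L/k),\Pic(X_L))$ with $L=k(\sqrt a,\sqrt m,\sqrt{m-4a})$ via Hochschild--Serre (the paper gets an isomorphism rather than just an injection, using $X(k)\neq\emptyset$), run a case analysis over the possible Galois groups acting on the explicit line classes of Lemma~2.1, and exhibit the generator by a divisor/residue computation. Your formula $\div\bigl((x/t)^2-4\bigr)=\sum_{\varepsilon,\delta}l_1(\varepsilon,\delta)-2H_2-2H_3$ and the residue check along the $l_1(\varepsilon,\delta)$ and along $H_2,H_3$ are correct, and the paper itself only writes out the degree-$2$ subcases of the cohomology computation, deferring the rest to [4, Proposition 3.2], so the sketchiness of your case analysis is comparable to the paper's.

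There is, however, one concrete error that would derail the computation as you describe it: the six pairwise skew lines $l_1(1,1),\dots,l_6(1,1)$ together with the hyperplane class $h$ of $X\subset\BP^3_k$ do \emph{not} form a $\BZ$-basis of $\Pic(X_L)$. Since $h^2=3$, $h\cdot l_i(1,1)=1$ and $l_i(1,1)^2=-1$, the Gram determinant of these seven classes is $9$, so they span only an index-$3$ sublattice; worse, that sublattice is not Galois-stable (in the paper's notation $\sigma(l_2)=l-l_3-l_4$ has $l$-coefficient $1$, not divisible by $3$), so "writing the action as integer matrices in the basis above" is impossible. The correct choice -- the one the paper makes, citing [7, Chapter V, Proposition 4.8] -- is the class $l$ with $(l,l)=1$ and $(l,l_i(1,1))=0$, i.e.\ the pullback of the line class under the blow-down $X_L\to\BP^2$ contracting the six skew lines; then $\{l,l_1(1,1),\dots,l_6(1,1)\}$ is an honest basis, $H_j=l-l_j(1,1)-l_{j+3}(1,1)$, and the matrices are integral. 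With that repair your plan goes through; note also that identifying the unramified symbol as \emph{the generator} still requires showing it is nonconstant (the paper does this for the affine analogue in Proposition~2.3 by comparing residues along the fibration $x$), not merely that it lies in $\Br(X)$.
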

\begin{proof}
The proof is completely similar to [4,Proposition3.2] , for later application,we only give computations of $\Br(X)/\Br_0(X)$ in somes cases.
Since X is geometrically rational,we have $\Br(X)=\Br_1(X)$.\ Since$ X(k) \neq \emptyset$, we have the following isomorphism
$$ \Br_1(X)/\Br_0(X) \cong H^1(k,\Pic(\overline{X})) $$
by the Hochschild-Serre spectral sequence. By [7,Chapter V,Proposition4.8],there is $ l\in \Pic(\overline{X})$ satisfying the following intersecton property
\begin{center}
$(l,l)=1 \quad (l,l_i(1,1))=0 $\quad for $1 \leq i\leq6$.\\
\end{center}
such that $\{ H_i(1,1): \ 1\leq i\leq 6\}\cup \{ l\} $ forms a basis of $\Pic(\overline{X})$.\\
Since $$ (H_j, l_i(1,1)) =\begin{cases} 1 \ \ \ & \text{$i-j \equiv 0$ or $3 \mod 6$} \\
0 \ \ \ & \text{otherwise} \end{cases} $$

where $1\leq j\leq3,1\leq i\leq 6$. One concludes that
$$ H_j=l-l_j(1,1)-l_{j+3}(1,1)$$
in $\Pic(\overline{X})$ for $ 1\leq j\leq3$ by [7,Chapter V,Proposition 4.8(e)].
For simplicity, we write $l_i$ for $l_i(1,1)$ with $1\leq i\leq 6$ . If $[k(\sqrt{a},\sqrt{m},\sqrt{m-4a}):k]=2$, there exists $\sigma \in Gal(k(\sqrt{a})/k)$ such that $\sigma (\sqrt{a}) =-\sqrt{a}$ .\\
1.$\sqrt{m} \in k,\sqrt{m-4a} \in k$, we have
$$
\begin{cases}
\sigma (l_1)=l_1 , \sigma (l_2)=l-l_{3}-l_4\\
\sigma (l_3)=l-l_{2}-l_4 , \sigma (l_4)=l-l_{2}-l_3 \\
\sigma (l_5)=l_5 ,   \sigma (l_6)=l_6\\
\sigma (l) =2l-l_{2}-l_3-l_4\\
\end{cases}
$$
Since $Ker(1+\sigma)=(l-l_2-L_3-l_4)$, \ \ $Im(\sigma-1)=(l-l_2-L_3-l_4)$,
we have  $H^1(k,Pic(\overline{X}))=0$ .\\

2. $\sqrt{m} \in k,\sqrt{m-4a} \notin k$, we have
$$
\begin{cases}
\sigma (l_1)= 2l-l_1-l_2-l_3-l_5-l_6  \\
\sigma (l_2)= l-l_1-l_6 \\
\sigma (l_3)= l-l_1-l_5 \\
\sigma (l_4)= l-l_5-l_6  \\
\sigma (l_5)= 2l-l_1-l_3-l_4-l_5-l_6 \\
\sigma (l_6)= 2l-l_1-l_2-l_4-l_5-l_6\\
\sigma  (l) = 4l-2l_1-2l_5-2l_6-l_2-l_3-l_4 \\
\end{cases}
$$
Since
\begin{center}
    \ \ $Ker(1+\sigma)=(l-l_1-l_6-l_2,l-l_1-l_5-l_3,l-l_4-l_5-l_6),$\\
     $Im(\sigma-1)=(l-l_1-l_6-l_2,l-l_1-l_5-l_3,l-l_4-l_5-l_6),$\\
\end{center}
we have $H^1(k,Pic(\overline{X}))=0$ .\\

3. $\sqrt{m} \notin k,\sqrt{m-4a} \in k$, we have
$$\begin{cases}
\sigma (l_1)=l_1  \\
\sigma (l_2)=l-l_3-l_4 \\
\sigma (l_3)=l-l_2-l_4 \\
\sigma (l_4)=2l-l_2-l_3-l_4-l_5-l_6\\
\sigma (l_5)=l-l_4-l_6  \\
\sigma (l_6)=l-l_4-l_5  \\
\sigma  (l) = 3l-l_2-l_3-2l_4-l_5-l_6 \\
\end{cases}
$$
Since
\begin{center}
$Ker(1+\sigma)=(l-l_2-l_3-l_4,l-l_4-l_5-l_6),$\\
 $Im (\sigma-1)=(l-l_2-l_3-l_4,l-l_4-l_5-l_6),$\\
\end{center}
we have $ H^1(k,Pic(\overline{X}))=0$.\\

4.$\sqrt{m} \notin k,\sqrt{m-4a}\notin k$ , we have
$$
\begin{cases}
\sigma (l_1)=2l-l_1-l_2-l_3-l_5-l_6 \\
\sigma (l_2)=l-l_1-l_6 , \sigma (l_3)=l-l_1-l_5 \\
\sigma (l_4)=l_4  , \sigma (l_5)=l-l_1-l_3 \\
\sigma (l_6)=l-l_1-l_2 \\
\sigma  (l) = 3l-2l_1-l_2-l_3-l_5-l_6 \\
\end{cases}
$$
Since \begin{center}
$Ker(1+\sigma)=(l-l_1-l_2-l_6,l-l_1-l_3-l_5),$\\
$Im (\sigma-1)=(l-l_1-l_2-l_6,l-l_1-l_3-l_5),$\\
\end{center}
we have $ H^1(k,Pic(\overline{X}))=0 $.
\end{proof}

\begin{proposition}
  Let U be the affine variety over a field of characteristic zero defined by the equation
  $$ ax^{2}+y^{2}+z^{2}-xyz=m$$
  where $a\in k^{\times},a\notin k^2$, $m\ne0 ,4a$. If $[k(\sqrt{a},\sqrt{m},\sqrt{m-4a}):k]=8$, we have\\
  $$ Br_1(U)/Br_0(U) \cong \mathbb{Z}/2\oplus\mathbb{Z}/2$$
  with generators $(x-2,m-4a)$,$(x+2,m-4a)$
\end{proposition}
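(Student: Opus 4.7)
The plan is to compute $\Br_1(U)/\Br_0(U)$ by identifying it (via the Hochschild--Serre spectral sequence) with $H^1(G, \Pic(\overline U))$, where $G = \Gal(K/k) \cong (\mathbb Z/2)^3$ with $K = k(\sqrt a, \sqrt m, \sqrt{m-4a})$ under the degree-$8$ hypothesis, and then to check that the explicit quaternion algebras $(x - 2, m-4a)$ and $(x + 2, m-4a)$ realize two independent nontrivial generators.

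First I would determine $\Pic(\overline U)$ from $\Pic(\overline X)$ via the localization sequence. Since $\overline X \setminus \overline U = H_1 \cup H_2 \cup H_3$ is a union of three $k$-rational lines whose classes $[H_j] = l - l_j - l_{j+3}$ are linearly independent in $\Pic(\overline X)$ (in the basis $\{l, l_1, \ldots, l_6\}$ from the proof of Proposition 2.2, writing $l_i := l_i(1,1)$), we obtain a short exact sequence of $G$-modules
$$0 \to \bigoplus_{j=1}^3 \mathbb Z [H_j] \to \Pic(\overline X) \to \Pic(\overline U) \to 0,$$
so that $\Pic(\overline U)$ is free of rank $4$ with basis the images of $l, l_1, l_2, l_3$ (using $l_{j+3} \equiv l - l_j$ for $j = 1, 2, 3$).

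Next I would compute the $G$-action on this basis. Each of $\sigma_a, \sigma_m, \sigma_{m-4a}$ (flipping the indicated square root) permutes the $24$ lines $l_i(\varepsilon,\delta)$ by rules read directly off Lemma 2.1: $\sigma_{m-4a}$ flips $\delta$ on every $l_i$; $\sigma_m$ fixes $l_1, l_2, l_3$ and sends $(\varepsilon, \delta) \mapsto (-\varepsilon, \delta)$ on $l_4, l_5, l_6$; and $\sigma_a$ fixes $l_1, l_5, l_6$, acts on $l_2$ by $(\varepsilon, \delta) \mapsto (-\varepsilon, \delta)$, and on $l_3, l_4$ by $(\varepsilon, \delta) \mapsto (-\varepsilon, -\delta)$. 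Each non-basis line class $l_i(\varepsilon', \delta')$ is then expanded in $\{l, l_1, \ldots, l_6\}$ using intersection numbers and Hartshorne V.4.8(e) (exactly the device used in Proposition 2.2) and reduced modulo $\langle [H_j]\rangle$. With the resulting action matrices in hand, $H^1((\mathbb Z/2)^3, \Pic(\overline U))$ is pure $\mathbb F_2$-linear algebra, and I expect the output $(\mathbb Z/2)^2$; the hypothesis $[K:k]=8$ is what guarantees that the three involutions act $\mathbb F_2$-linearly independently, so no cocycle cancellations occur.

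Finally I would identify $(x \pm 2, m - 4a)$ with the two independent generators. Each lies in $\Br(U)$: on the divisor $\{x = \pm 2\} \subset U$ the defining equation reduces to $(y \mp z)^2 = m - 4a$, so $m - 4a$ is a square in the residue field and the tame symbol of the quaternion algebra vanishes along this divisor. Non-triviality and independence modulo $\Br_0(U)$ follow from matching each with its Galois $1$-cocycle: $\{x = \pm 2\}$ splits over $k(\sqrt{m-4a})$ as $y \mp z = \pm\sqrt{m-4a}$, and the resulting divisor-class differences in $\Pic(\overline U)$ realize two distinct nonzero classes in $H^1(G, \Pic(\overline U))$; as a consistency check, the sum $(x^2 - 4, m - 4a) \in \Br(U)$ is the restriction of the class $((x/t)^2 - 4, m-4a)$ on $X$ recorded in Proposition 2.2. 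The main obstacle is the bookkeeping in the Galois-action computation: expressing the $24$ non-basis line classes $l_i(\varepsilon, \delta)$ in a fixed basis of $\Pic(\overline X)$ and tracking the images under all eight elements of $G$. The analogous computation for a single involution already occupies the bulk of the proof of Proposition 2.2; here it must be carried out for three involutions simultaneously and checked for mutual compatibility. Once these matrices are on paper, computing $H^1$ and matching the quaternion algebras to the generators are both routine.
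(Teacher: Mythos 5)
Your plan is correct and, for the core computation, coincides with the paper's: the paper likewise identifies $\Br_1(U)/\Br_0(U)$ with $H^1(G,\Pic(\overline U))$ via Hochschild--Serre, takes $\Pic(\overline U)\cong\oplus_{i=1}^{4}\mathbb Z\,\overline l_i$ as the quotient of $\Pic(\overline X)$ by the classes $l-l_j-l_{j+3}$, and writes out the action of three involutions (its generators $\sigma,\tau,\theta$ each flip $\sqrt a$ together with none, one, or the other remaining root --- an equivalent generating set to your $\sigma_a,\sigma_m,\sigma_{m-4a}$; your description of the permutation of the $24$ lines is consistent with its tables). The two places you diverge are worth noting. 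First, instead of attacking $H^1\bigl((\mathbb Z/2)^3,\Pic(\overline U)\bigr)$ head-on by ``routine linear algebra,'' the paper runs inflation--restriction twice: it checks $H^1(\langle\sigma\rangle,\Pic(\overline U))=0$, reduces to $H^1(\langle\tau,\theta\rangle,\Pic(\overline U)^{\langle\sigma\rangle})$, kills the $\tau$-contribution, and lands on $H^1(\langle\theta\rangle,\Pic(\overline U)^{\langle\sigma,\tau\rangle})\cong\mathbb Z/2\oplus\mathbb Z/2$; this is markedly less bookkeeping than a direct cocycle computation for a non-cyclic group of order $8$ acting on a rank-$4$ lattice, and it is exactly the step your proposal defers, so you should adopt it or be prepared for the heavier computation. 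Second, for non-constancy of the generators the paper does not match the symbols with explicit classes in $H^1(G,\Pic(\overline U))$ as you propose (which would require expressing the components of $\{x=\pm2\}$ in the $\overline l_i$ basis); it instead projects to the $x$-line, uses that the kernel of $\Br(k(x))\to\Br(U_\eta)$ is generated by the class $(x^2-4,m-ax^2)$ of the conic fibre, and separates $(x^2-4,m-4a)$ from that class and from constants by comparing residues at $(m-ax^2)$ in the Faddeev sequence --- this buys non-triviality with no further Picard computation, at the cost of directly treating only the sum $B=(x^2-4,m-4a)$. Your residue argument for membership in $\Br(U)$ (that $m-4a$ becomes a square on $\{x=\pm2\}$ since the equation degenerates to $(y\mp z)^2=m-4a$ there) is a legitimate substitute for the paper's codimension-$2$ intersection argument.
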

\begin{proof}
  Let $G=Gal(k(\sqrt{a},\sqrt{m},\sqrt{m-4a})/k)$, there exist $\sigma$,$\tau$ and $\theta$ $\in$ G,such that
  \begin{center}
  $\sigma(\sqrt{a})=-\sqrt{a},\quad \sigma(\sqrt{m})=\sqrt{m},\ \ \sigma(\sqrt{m-4a})=\sqrt{m-4a}.$\\
  $\tau(\sqrt{a})=-\sqrt{a},  \quad   \tau(\sqrt{m})=-\sqrt{m},\ \ \tau(\sqrt{m-4a})=\sqrt{m-4a}.$\\
  $\theta(\sqrt{a})=-\sqrt{a},\quad \theta(\sqrt{m})=\sqrt{m},\ \ \theta(\sqrt{m-4a})=-\sqrt{m-4a}.$\\
  \end{center}
By [4,Proposition 2.2], $Pic(\overline{U})$ is given by the following quotient group
  $$ (\oplus^{6}_{i=1}\mathbb{Z}l_{i}\oplus \mathbb{Z}l)/(l-l_j-l_{j+3}:1\le j \le3)\cong \oplus^4_{i=1}\mathbb{Z}{\overline l_i}. $$
By computations in proposition 2.2, we have \\
\begin{enumerate}[(i)]
\item $\begin{cases}
\sigma (\overline l_1)=\overline l_1\\
\sigma (\overline l_2)=\overline l_1-\overline l_{3}\\
\sigma (\overline l_3)=\overline l_1-\overline l_{2} \\
\sigma (\overline l_4)=\overline l_1+\overline l_4-\overline l_{2}-\overline l_3\\
\end{cases}$ \\
Since $Ker(1+\sigma)=(\overline l_1-\overline l_{3}-\overline l_{2})$ , $Im(\sigma-1)=(\overline l_1-\overline l_{3}-\overline l_{2})$,\\
we have $H^1(\langle \sigma \rangle, Pic(\overline{U}))=0$.
\item  $\begin{cases}
  \theta (\overline l_1)=-\overline{l_1}  \\
  \theta (\overline l_2)= \overline l_3-\overline l_1 \\
  \theta (\overline l_3)= \overline l_2-\overline l_1 \\
  \theta (\overline l_4)= \overline l_2+\overline l_3-\overline l_1-\overline l_4\\
\end{cases}$\\

\item $\begin{cases}
  \tau (\overline l_1)=\overline l_1      \\
  \tau (\overline l_2)=\overline l_1-\overline l_3\\
  \tau (\overline l_3)=\overline l_1-\overline l_2 \\
  \tau (\overline l_4)=-\overline l_4\\
\end{cases}$\\
\end{enumerate}
  Let $H={\langle \tau,\theta \rangle}$, we have the following exact sequence
  \begin{center}
  $0\to H^1(H,Pic(\overline{U})^{\langle \sigma \rangle})\to H^1(G,Pic(\overline{U}))\to H^1(\langle \sigma \rangle,Pic(\overline{U}))=0$.
  \end{center}
  where $Pic(\overline{U})^{\langle \sigma \rangle}=(\overline l_1,\overline l_2-\overline l_4,\overline l_3-\overline l_4)$. Let us compute $H^1(H,Pic(\overline{U})^{\langle \sigma \rangle})$,
  we have the following exact sequence
  \begin{center}
  $0\to H^1(\langle \theta \rangle,Pic(\overline{U})^{\langle \sigma,\tau \rangle})\to H^1(H,Pic(\overline{U})^{\langle \sigma \rangle})\to H^1(\langle \tau \rangle,Pic(\overline{U})^{\langle \sigma \rangle})$.
  \end{center}
  Since $$ \begin{cases}
  \tau(\overline l_1)=\overline l_1 \\
  \tau(\overline l_2-\overline l_4)=\overline l_1-\overline l_3+l_4\\
  \tau(\overline l_3-\overline l_4)=\overline l_1-\overline l_2+l_4\\
  \end{cases}
  $$
  we have
  $$ Ker(1+\tau)=(\overline l_1-\overline l_3-\overline l_2+2\overline l_4), Im(\tau-1)=(\overline l_1-\overline l_3-\overline l_2+2\overline l_4).$$
  One concludes that  $H^1(\langle \tau \rangle,Pic(\overline{U})^{\langle \sigma \rangle})=0$, hence
  $$ H^1(G,Pic(\overline{U}))\cong H^1(H,Pic(\overline{U})^{\langle \sigma \rangle})\cong H^1(\langle \theta \rangle,Pic(\overline{U})^{\langle \sigma,\tau \rangle})$$
  where $Pic(\overline{U})^{\langle \sigma,\tau \rangle}=(\overline l_1,\overline l_2-\overline l_3)$.
  Since $$ \begin{cases}
  \theta(\overline l_1)=-\overline l_1 \\
  \theta(\overline l_2-\overline l_3)=\overline l_3-\overline l_2\\
\end{cases}
  $$
one has
  $$ H^1(\langle \theta \rangle,Pic(\overline{U})^{\langle \sigma,\tau \rangle})\cong \mathbb{Z}/2\oplus\mathbb{Z}/2. $$
  We obtain
  $$ H^1(G,Pic(\overline{U}))\cong \mathbb{Z}/2\oplus\mathbb{Z}/2. $$

 Note that $ax^{2}+y^{2}+z^{2}-xyz=m $ is equivalent to
 $$ (2z-xy)^2-4(m-4a)=(x^2-4)(y^2-4a)$$
 Arguing in the same way as in the proof of [4,Theorem 3.4], one obtains the generators $(x-2,m-4a)$ and $(x+2,m-4a)$.
 Indeed  since
 $$ \{x\pm 2=0\} \cap \{((x\mp2)(y^2-4a)=0 \} $$ is a closed subset of codimension $\geq 2$ on $U$, one obtains that $(x\pm 2, m-4a)\in \Br_1(U)$.
 This implies that
 $$ B=(x^2-4,m-4a)=(y^2-4a,m-4a)=(z^2-4a,m-4a)  \in \Br_1(U).  $$
 Now we show that $B$ is not constant.
 $$ \pi:  \ U\rightarrow \Bbb A^1; \ (x, y, z) \mapsto x . $$ The generic fibre $U_\eta \xrightarrow{\pi_\eta} \eta$ induces $$\pi_{\eta}^*: \Br(\eta) \rightarrow \Br(U_\eta) \ \ \ \text{with} \ \ \  \ker (\pi_\eta^*) =(x^2-4, m-ax^2)$$ by [6,Theorem 5.4.1].
 Since $[k(\sqrt{a},\sqrt{m},\sqrt{m-4a}) : k]=8$,  the residue of $(x^2-4,m-4a)$ at $(m-ax^2)$ is different from that of $(x^2-4,m-ax^2 )$.
 This implies that $\pi_\eta^*(x^2-4,m-4a)$ is not constant by the Faddeev exact sequence. Since $\pi_\eta^*(x^2-4,m-4a)$ is the pull-back of $B$ by the projection map $U_\eta\rightarrow U$, one concludes that $B$ is not constant.
\end{proof}

\bigskip
\section {Examples of Brauer-Manin obstruction}
We now give examples of Brauer-Manin obstruction to the integral Hasse principle. Here the results are inspired by the results in [10,\S 5.3,\S 5.4]
\begin{lemma}
If $p$ is an odd prime with $(p,m-4a)=1$, then the following elements
$$ (x+2,m-4a),(x-2,m-4a),(z^2-4a,m-4a),(y^2-4a,m-4a)$$
vanish over $U(\mathbb{Z}_p)$. If $m-4a > 0$,these elements vanish over $U(\mathbb{R})$.
In particular, if $a < 0$, $(x^2-4,m-4a)_\infty=(z^2-4a,m-4a)_\infty=(y^2-4a,m-4a)_\infty=0$.

\end{lemma}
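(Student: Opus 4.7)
The plan is to reduce each claim to elementary facts about Hilbert symbols: at an odd prime $p$, the symbol of two $p$-adic units vanishes; in $\mathbb R$, $(A,B)_\infty=0$ whenever $A>0$ or $B>0$. The key technical tool is the pair of identities, obtained by regarding the defining equation $ax^2+y^2+z^2-xyz=m$ as a quadratic in $z$ and in $y$ respectively,
\begin{align*}
(xy-2z)^2-4(m-4a) &= (x^2-4)(y^2-4a),\\
(xz-2y)^2-4(m-4a) &= (x^2-4)(z^2-4a).
\end{align*}
Together with $(x-2)(x+2)=x^2-4$, these identities imply, as already recorded in the proof of Proposition 2.3, the equality of Brauer classes $(x^2-4,m-4a)=(y^2-4a,m-4a)=(z^2-4a,m-4a)$ in $\Br(U)$.

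For an odd prime $p$ with $(p,m-4a)=1$ and a point $P=(x,y,z)\in U(\mathbb Z_p)$, I would first dispose of the case that $m-4a$ is a square in $\mathbb Q_p$, in which all four symbols vanish trivially. In the remaining case (so $m-4a$ is a unit non-residue modulo $p$), it suffices to show that $x-2$, $x+2$, $y^2-4a$, and $z^2-4a$ are all $p$-adic units at $P$: each of the four Hilbert symbols will then be the symbol of two units at an odd prime, hence $0$. The congruence $x\equiv\pm 2\pmod p$ substituted into the defining equation yields $(y\mp z)^2\equiv m-4a\pmod p$, contradicting the non-residue assumption. The congruence $y^2\equiv 4a\pmod p$ combined with the first identity above forces $(xy-2z)^2\equiv 4(m-4a)\pmod p$, likewise contradicting non-residuality; the case $z^2\equiv 4a\pmod p$ is handled symmetrically using the second identity.

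For the archimedean place with $m-4a>0$, $m-4a$ is positive so every symbol $(*,m-4a)_\infty$ vanishes. For the ``in particular'' statement with $a<0$, one has $y^2-4a\ge -4a>0$ and $z^2-4a\ge -4a>0$ at every $P\in U(\mathbb R)$, so $(y^2-4a,m-4a)_\infty=(z^2-4a,m-4a)_\infty=0$ because the first entry is positive; the remaining symbol $(x^2-4,m-4a)_\infty$ coincides with $(y^2-4a,m-4a)_\infty$ via the Brauer class equality recalled above, so it vanishes as well.

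The main technical obstacle is the modulo-$p$ argument for $y^2-4a$ and $z^2-4a$: unlike the case of $x\pm 2$, this obstruction does not follow immediately from reducing the defining equation modulo $p$, but from the extra fact that the discriminant of that equation viewed as a quadratic in $z$ (resp.\ in $y$) must be a perfect square in $\mathbb Z_p$, which is precisely what the first (resp.\ second) identity above records. Once these modular obstructions are in hand, every assertion reduces to the elementary Hilbert symbol computations outlined above.
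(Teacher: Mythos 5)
Your proof is correct and takes the same route the paper intends: the paper's own ``proof'' is only a citation to [4, Lemma 5.1], and your argument --- the two completing-the-square identities, the split into the cases where $m-4a$ is a $p$-adic square versus a unit non-residue (in which case $x\pm2$, $y^2-4a$, $z^2-4a$ are forced to be units), and the positivity arguments at the real place --- is exactly that cited argument adapted to the twisted equation. No gaps.
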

\begin{proof}
Arguing in the same way as in the proof of [4,Lemma 5.1 ], one can easily verify this.
\end{proof}
\begin{lemma}
Let $p \mid (m-4a)$ be odd , if $p \nmid a$,  any singular point T$(x,y,z) \in U(\Bbb F_{p})$ satisfies
$$ x^2=4,\quad y^2=4a$$
\end{lemma}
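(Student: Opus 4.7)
The plan is to analyze the singular point condition directly from the three partial derivatives of $f(x,y,z) = ax^{2}+y^{2}+z^{2}-xyz-m$. These are
$$f_x = 2ax - yz,\qquad f_y = 2y - xz,\qquad f_z = 2z - xy.$$
At a singular point $T \in U(\mathbb{F}_p)$ all three vanish mod $p$, and since $p$ is odd, $2$ is invertible in $\mathbb{F}_p$.

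The first key step is to multiply the identities $2y = xz$ and $2z = xy$, which gives $4yz = x^{2}yz$, i.e.
$$yz(x^{2}-4)\equiv 0 \pmod{p}.$$
I would then rule out the branch $yz \equiv 0$: if $yz \equiv 0$ then $f_x = 0$ forces $ax \equiv 0$, so by $p \nmid a$ we get $x \equiv 0$, and then $f_y = f_z = 0$ give $y \equiv z \equiv 0$. Substituting the origin into the defining equation yields $m \equiv 0 \pmod{p}$; combined with $p \mid m - 4a$ this would give $p \mid 4a$, contradicting the hypotheses that $p$ is odd and $p \nmid a$.

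Thus $x^{2} \equiv 4 \pmod{p}$. To produce $y^{2} \equiv 4a$, I would use $f_y = 0$ to write $y = xz/2$, and substitute into $f_x = 0$ to get $2ax = xz^{2}/2$. Since $x \not\equiv 0 \pmod{p}$, we may divide to obtain $z^{2} \equiv 4a \pmod{p}$, whence
$$y^{2} \;=\; \frac{x^{2}z^{2}}{4} \;\equiv\; \frac{4 \cdot 4a}{4} \;=\; 4a \pmod{p}.$$

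The argument is entirely elementary; there is no real obstacle. The only point requiring care is the use of the hypothesis $p \nmid a$ (together with $p$ odd) in order to exclude the origin from being a singular point when $p \mid m$, which is precisely what eliminates the $yz \equiv 0$ branch and forces the asserted equalities $x^{2}=4$, $y^{2}=4a$.
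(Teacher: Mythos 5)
Your proof is correct and takes essentially the same route as the paper, which merely displays the singularity system (with the defining equation rewritten as $(2z-xy)^2=(x^2-4)(y^2-4a)$ modulo $p$ using $p\mid m-4a$) and asserts the conclusion. You supply the elementary eliminations the paper leaves implicit, including the needed exclusion of the origin via $p\nmid a$ and $p$ odd; no gaps.
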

\begin{proof}
 Since $T \in U(\Bbb F_{p})$ is singular, we have
 $$\begin{cases}
 (2z-xy)^2=(x^2-4)(y^2-4a)\\
 2ax-yz=0\\
 2y-xz=0\\
 2z-xy=0\\
\end{cases}
 $$
 We obtain $x^2=4,\quad y^2=4a$.
\end{proof}
\begin{lemma}
Let $p \geq 3$ such that $p\mid (m-4a)$ and $p\nmid a$, with $ord_p(m-4a)$ even but $m-4a \notin {Q_p^\times}^2$.
Let $\mathcal B_1=(x^2-4,m-4a), \  \mathcal B_2=(x+2,m-4a).$ \ For all $T \in U(\mathbb{Z}_p)$, we have \\
If $(\frac{a}{p})=-1$,
$$ \{inv_{p}\mathcal B_1(T),inv_{p}\mathcal B_2(T)\} = \{{0,0}\} .$$
If $(\frac{a}{p})=1$,
$$ \{inv_{p}\mathcal B_1(T),inv_{p}\mathcal B_2(T)\} \in \{\{{0,0}\},\{\frac{1}{2},\frac{1}{2}\},\{\frac{1}{2},0\}\} .$$
\end{lemma}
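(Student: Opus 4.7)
The approach is to evaluate the Hilbert symbols $(c,m-4a)_p$ for $c=x+2$ and $c=x^2-4$ directly, using the hypothesis that $m-4a=up^{2k}$ with $u\in\BZ_p^\times$ a non-square. The tame formula at the odd prime $p$ then gives $\inv_p(c,m-4a)=\tfrac12\bigl(\ord_p(c)\bmod 2\bigr)$ for any $c\in\BQ_p^\times$, so
\[
\inv_p\mathcal B_2(T)=\tfrac12\bigl(\ord_p(x+2)\bmod 2\bigr),\qquad \inv_p\mathcal B_1(T)=\tfrac12\bigl((\ord_p(x-2)+\ord_p(x+2))\bmod 2\bigr).
\]

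Since $p\geq 3$, we cannot have both $p\mid x-2$ and $p\mid x+2$ (otherwise $p\mid 4$), so $\min(\ord_p(x-2),\ord_p(x+2))=0$. Enumerating the three resulting sign patterns gives immediately $\{\inv_p\mathcal B_1(T),\inv_p\mathcal B_2(T)\}\in\{\{0,0\},\{\tfrac12,\tfrac12\},\{\tfrac12,0\}\}$, which already settles the case $\sfrac{a}{p}=1$.

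For $\sfrac{a}{p}=-1$ one must exclude the pairs $\{\tfrac12,\tfrac12\}$ and $\{\tfrac12,0\}$, equivalently show that $\gamma:=\ord_p(x^2-4)$ is always even. Here $4a$ is not a square mod $p$, so $\ord_p(y^2-4a)=0$ for every $(x,y,z)\in U(\BZ_p)$, and the defining relation rewrites as
\[
(2z-xy)^2=(x^2-4)(y^2-4a)+4up^{2k}.
\]
If $\gamma<2k$ the right-hand side has valuation $\gamma$, which must be even since the left-hand side is a square; if $\gamma=2k$, evenness is immediate. The critical subcase is $\gamma>2k$: then $\ord_p((2z-xy)^2)=2k$, so $2z-xy=p^k w$ with $w\in\BZ_p^\times$, and dividing through by $p^{2k}$ and reducing mod $p$ gives $w^2\equiv 4u\pmod p$. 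This forces $u$ to be a square mod $p$, contradicting the non-square hypothesis. Thus $\gamma$ is always even; since at most one of $\ord_p(x\pm 2)$ is positive, both must be even and the invariant pair is $\{0,0\}$.

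The main obstacle is disposing of the subcase $\gamma>2k$ in the $\sfrac{a}{p}=-1$ part; this is the only step that genuinely uses the hypothesis $m-4a\notin(\BQ_p^\times)^2$, and it captures the heuristic that large cancellation on the right of the defining equation can only occur when the unit $u$ is a square. The remainder is bookkeeping of the tame-symbol formula and of the two possible sign patterns of $\ord_p(x\pm 2)$.
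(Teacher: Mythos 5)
Your proof is correct and follows essentially the same route as the paper: the same identity $(2z-xy)^2=(x^2-4)(y^2-4a)+4(m-4a)$, the same tame Hilbert-symbol evaluation reducing everything to the parity of $\ord_p(x\pm 2)$, and the same use of the non-squareness of $m-4a$ in the case $\sfrac{a}{p}=-1$. The only difference is that you make explicit the valuation trichotomy ($\gamma$ versus $2k$) behind the paper's unjustified assertion that $\ord_p((x^2-4)(y^2-4a))$ is even, which is a welcome clarification rather than a departure.
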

\begin{proof}
Note that $ax^{2}+y^{2}+z^{2}-xyz=m $ is equivalent to
$$ (2z-xy)^2=(x^2-4)(y^2-4a)+4(m-4a) $$
As $(m-4a) \notin {Q_p^\times}^2$ and $ord_p(m-4a)$ is even, it follows that $ord_p((x^2-4)(y^2-4a))$ is even.\\
If $ord_p(x^2-4)$ is even ,one obtains $ord_p(x+2)$ is even,\ hence $\{inv_{p}\mathcal B_1(T),inv_{p}\mathcal B_2(T)\}=\{\{0,0\}\}$.
Assume that $ord_p(x^2-4)$ is odd, then $ord_p(y^2-4a)$ is odd, thus $y^2 \equiv 4a \bmod p$. If $(\frac{a}{p})=-1$, this is a contradiction.
Now let $(\frac{a}{p})= 1$, if $ord_p(x+2)$ is odd, we obtain
$$ \{inv_{p}\mathcal B_1(T),inv_{p}\mathcal B_2(T)\}=\{\frac{1}{2},\frac{1}{2}\}. $$
If $ord_p(x-2)$ is odd, we obtain
$$ \{inv_{p}\mathcal B_1(T),inv_{p}\mathcal B_2(T)\}=\{\frac{1}{2},0\}.$$
We now show that these possibilities can be realised. We first consider $(\frac{a}{p})=-1$,
we have smooth point $(2,0,0) \in U(\Bbb F_{p})$ by lemma 3.2, hence $ U(\mathbb{Z}_p) \ne \emptyset$.
Now we consider $(\frac{a}{p})=1$, we have smooth point $(0,2\sqrt{a},0)\in U(\Bbb F_{p})$ by lemma 3.2, hence there exists
$T \in U(\mathbb{Z}_p)$ such that $$ \{inv_{p}\mathcal B_1(T),inv_{p}\mathcal B_2(T)\} = \{\{0,0\}\}.$$
Suppose $ord_p(m-4a)=2$, let $ s \in \Bbb F_{p}^{\times}$ such that $(\frac{\frac{m-4a}{p^2}-s}{p})=1$. Let $s' \in \mathbb{Z}_p$ such that
$s' \equiv s \bmod p$ . There is $y_0 \in \mathbb{Z}_p$ such that ${y_0}^2-4a=ps'$ by Hensel lemma. Let $x_0=p-2$, we consider the following
equation
$$ p^2t^2-4(m-4a)=({x_0}^2-4)({y_0}^2-4a)$$
over $\mathbb{Z}_p$.
That is  $$ t^2-\frac{4(m-4a)}{ p^2}=(p-4)s'.$$
By Hensel lemma, one can see that the equation has solutions. Let $t_0$ denote one of the solutions and let $z_0 \in \mathbb{Z}_p$ such that
$2z_0-x_0y_0=pt_0$, then $T_0=(x_0,y_0,z_0) \in U(\mathbb{Z}_p)$, we have
$$\{inv_{p}\mathcal B_1(T_0),inv_{p}\mathcal B_2(T_0)\}=\{\{\frac{1}{2},\frac{1}{2}\}\}.$$
If we let $x_1=p+2$, one can see that there exists $T_1=(x_1,y_1,z_1) \in U(\mathbb{Z}_p)$, we have
$$\{inv_{p}\mathcal B_1(T_1),inv_{p}\mathcal B_2(T_1)\}=\{\{\frac{1}{2},0\}\}.$$
For $ord_p(m-4a)>2$, the proof is similar .

\end{proof}

\begin{proposition}
	Suppose conditions of Proposition 2.3 are satisfied, if there exists $p \geq 5$ such that $p\nmid a$ and $ord_p(m-4a)$ is odd,
there is no Brauer--Manin obstruction to integral Hasse principle.
\end{proposition}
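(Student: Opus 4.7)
\medskip\noindent
\emph{Proof proposal.} Write $\mathcal{A}_1 = (x-2, m-4a)$ and $\mathcal{A}_2 = (x+2, m-4a)$ for the generators of $\Br(U)/\Br_0(U) \cong (\mathbb{Z}/2)^2$ given by Proposition 2.3. The plan is to show that the joint local evaluation map
\[
\rho \colon U(\mathbb{Z}_p) \longrightarrow \left(\tfrac{1}{2}\mathbb{Z}/\mathbb{Z}\right)^2, \qquad P \longmapsto \bigl(\inv_p \mathcal{A}_1(P),\ \inv_p \mathcal{A}_2(P)\bigr),
\]
is surjective. Once this is granted, Lemma 3.1 (which kills $\mathcal{A}_1$ and $\mathcal{A}_2$ at every odd $v$ coprime to $m-4a$) makes the obstruction sums $s_i := \sum_{v\neq p} \inv_v \mathcal{A}_i(P_v)$ well-defined finite elements of $\tfrac{1}{2}\mathbb{Z}/\mathbb{Z}$ for any adelic integral point $(P_v) \in U(\mathbf{A}_{\mathbb{Z}})$; replacing $P_p$ by an element of $U(\mathbb{Z}_p)$ with $\rho(P_p') = (s_1, s_2)$ (using that the group is $2$-torsion) produces an adelic point orthogonal to $\Br(U)$, so no Brauer--Manin obstruction remains.

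For the surjectivity of $\rho$, I propose two families of local test points. \emph{First family:} for any $x_0 \in \mathbb{F}_p \setminus \{\pm 2\}$ the nondegenerate binary form $y^2 - x_0 yz + z^2$ represents $a(4-x_0^2) \equiv m - ax_0^2 \pmod p$, producing a point of $U(\mathbb{F}_p)$ smooth by Lemma 3.2 (which forces any singular point to have $x^2 \equiv 4$). Hensel lifts it to $P \in U(\mathbb{Z}_p)$, and since $\ord_p(m-4a)$ is odd, the Hilbert symbol identity $(u, p^{2k_0+1}v)_p = \omega_p(u)$ for units $u, v \in \mathbb{Z}_p^\times$ yields
\[
\rho(P) = \tfrac{1}{2}\bigl(1 - \omega_p(x_0-2),\ 1 - \omega_p(x_0+2)\bigr),
\]
where $\omega_p$ denotes the Legendre symbol with values in $\{\pm 1\}$. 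A standard character-sum count using $\sum_{x \in \mathbb{F}_p} \bigl(\tfrac{x^2-4}{p}\bigr) = -1$ shows that for $p \geq 7$ all four sign pairs $\bigl(\omega_p(x_0-2),\omega_p(x_0+2)\bigr) \in \{\pm 1\}^2$ are realized by some $x_0$, so $\rho$ is already surjective in that range.

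For $p = 5$ the pattern $(+,+)$ (yielding $\rho = (0,0)$) does not occur among $x_0 \in \mathbb{F}_5 \setminus \{\pm 2\}$, so I supplement with a \emph{second family}: take $x = 2 + p^\ell u$ with $\ell = \ord_p(m-4a)$ and $u \in \mathbb{Z}_p^\times$. Using the identity
\[
(2z - xy)^2 = (x^2-4)(y^2-4a) + 4(m-4a)
\]
with $x^2 - 4 = p^\ell u(4 + p^\ell u)$, the strategy is: choose $u$ and then $y \in \mathbb{Z}_p$ so that $u(y^2-4a) + v \equiv 0 \pmod p$ (where $v = (m-4a)/p^\ell \in \mathbb{Z}_p^\times$), which forces the right-hand side to have valuation at least $\ell + 1$; perturb $y$ by Hensel so the right-hand side becomes $p^{\ell+1}$ times a prescribed quadratic residue unit, hence a square in $\mathbb{Z}_p$; finally extract $z \in \mathbb{Z}_p$ from the square root. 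At such a point $\inv_p \mathcal{A}_2 = 0$ since $x+2 \equiv 4$ is a unit square, while the Hilbert symbol formula gives
\[
\inv_p \mathcal{A}_1 = \tfrac{1}{2}\bigl(1 - (-1)^{(p-1)/2}\omega_p(u)\omega_p(v)\bigr),
\]
which takes either value in $\tfrac{1}{2}\mathbb{Z}/\mathbb{Z}$ as $u$ varies over $\mathbb{Z}_p^\times$; in particular the missing $(0,0)$ is produced, completing the proof of surjectivity of $\rho$.

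The principal technical obstacle is the existence verification for the second family: one must carefully track $p$-adic valuations in the key identity after these substitutions to confirm the right-hand side is indeed a square in $\mathbb{Z}_p$ of order exactly $\ell + 1$, and simultaneously arrange the residue of $y \pmod p$ together with the unit part of $u$ so that both the local solvability condition $u(y^2-4a) + v \equiv 0 \pmod p$ and the prescribed Hilbert-symbol value are met. The hypotheses $p \geq 5$ and $p \nmid a$ are used precisely here, to guarantee enough available Legendre residues for the relevant Hensel steps and for the value $y_0$ with $y_0^2 \equiv 4a - v/u \pmod p$ to be nonzero (so the linear perturbation in $s$ hits every residue class).
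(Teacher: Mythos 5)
Your overall strategy coincides with the paper's: reduce everything to showing that the joint evaluation map $U(\mathbb{Z}_p)\to(\tfrac12\mathbb{Z}/\mathbb{Z})^2$ attached to the two generators is surjective, and your first family together with the character-sum count does establish this for $p\ge 7$ (the count of $x_0$ realizing a given sign pattern is $\tfrac14\bigl(p-2-\epsilon_1(\tfrac{-1}{p})-\epsilon_2-\epsilon_1\epsilon_2\bigr)\ge\tfrac{p-5}{4}>0$). The gap is in the second family, which is the only thing covering $p=5$, and it is a real one, not merely an unchecked detail. For the pattern $(0,0)$ your construction needs a unit $u$ with $(\tfrac{u}{p})(\tfrac{v}{p})(\tfrac{-1}{p})=1$ (so that $\inv_p\mathcal A_1=0$) for which $4a-v/u$ is a \emph{nonzero} square mod $p$ (so that $y_0\ne0$ and the Hensel perturbation in $y$ is linear, as you yourself require). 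For $p=5$ one has $(\tfrac{-1}{5})=1$, so $w=v/u$ is forced to run over the nonzero squares $\{1,4\}$ modulo $5$; if $a\equiv1\pmod 5$ then $4a-w\in\{3,0\}$, and if $a\equiv4\pmod 5$ then $4a-w\in\{0,2\}$, so in both cases no admissible $w$ exists. Thus whenever $(\tfrac{a}{5})=1$ your second family cannot produce the class $(0,0)$, and the only branch left, $y_0=0$, is exactly the degenerate one your argument excludes. Since the first family provably misses $(0,0)$ at $p=5$ as well, the proof is incomplete precisely there.

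The paper sidesteps this by exploiting the relations $(x^2-4,m-4a)=(y^2-4a,m-4a)=(z^2-4a,m-4a)$ in $\Br(U)$: it takes smooth $\mathbb F_p$-points with $x\equiv 2\pmod p$, so that $(x+2,m-4a)$ evaluates trivially, and reads the other generator off the residue of $y^2-4a$, which is a unit there. Concretely, at a Hensel lift of $(2,t,t)$ with $t^2-4a$ a nonzero square mod $p$ both invariants vanish, which supplies the missing $(0,0)$ for every $p\ge5$ with no valuation bookkeeping; the remaining patterns come from similar points with $x\equiv2$ or $x\equiv e-2$ and prescribed Legendre symbols of $e$ and $e^2-4e$. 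If you want to keep your framework, replace the second family by the identity $\inv_p(x-2,m-4a)=\inv_p(y^2-4a,m-4a)-\inv_p(x+2,m-4a)$, which lets you control both invariants at points with $x\equiv\pm2$ without ever pinning down $\ord_p(x-2)$.
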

\begin{proof}
We can assume that $U(A_\mathbb Z) \ne \emptyset$, where $A_\mathbb Z=\RR \times \prod_p \mathbb{Z}_p$ ,otherwise there is nothing to prove.
Let $$ \begin{cases}
\mathcal B_1 =(x^2-4,m-4a)=(y^2-4a,m-4a)=(z^2-4a,m-4a) \\
\mathcal B_2=(x+2,m-4a)\\
\end{cases}
$$
to prove the proposition ,
it suffices to show for all $(\varepsilon_1,\varepsilon_2) \in (\mathbb{Z}/2\mathbb{Z})^{2}$, there exists
$\zeta \in U(\mathbb{Z}_p)$ such that
$$ (inv_{p}\mathcal B_1(\zeta),inv_{p}\mathcal B_2(\zeta))=(\varepsilon_1,\varepsilon_2)$$
We first consider the case $(\frac{a}{p})=1$.\\
Since $p \geq 5$,there exist  $t,s \in \Bbb F_{p}$, such that the Legendre symbol $(\frac{t^2-4a}{p})=1$, $ (\frac{s^2-4a}{p})=-1$, let
$$ \upsilon_1=(2,t,t),\quad \upsilon_2=(2,s,s). $$
One can see that $\upsilon_1$ and $\upsilon_2$ are smooth points of $U(\Bbb F_{p})$ by lemma 3.2, hence there exist $\mu_{1},\mu_{2} \in U(\mathbb{Z}_p)$
such that $\mu_{i}\equiv \upsilon_{i} \bmod p$ for $1 \leq i\leq 2$, we obtain
$$ \begin{cases}
(inv_{p}\mathcal B_1(\mu_1),inv_{p}\mathcal B_2(\mu_1))=(0,0)\\
(inv_{p}\mathcal B_1(\mu_2),inv_{p}\mathcal B_2(\mu_2))=(\frac{1}{2},0)\\
\end{cases}
$$
Since $p \geq 5$ ,we can choose $e,f \in \Bbb F_{p}$ such that the Legendre symbol $(\frac{e}{p})=-1$,$(\frac{e^2-4e}{p})=1$,
$(\frac{f}{p})=-1$, $(\frac{f^2-4f}{p})=-1$. Let

$$ \upsilon_3=(e-2,2\sqrt{a},(e-2)\sqrt{a}),\quad \upsilon_4=(f-2,2\sqrt{a},(f-2)\sqrt{a}) .$$
One can see that $\upsilon_3$ and $\upsilon_4$ are smooth points of $U(\Bbb F_{p})$ by lemma 3.2, hence there exist $\mu_{3},\mu_{4} \in U(\mathbb{Z}_p)$
such that $\mu_{i}\equiv \upsilon_{i} \bmod p$ for $3 \leq i\leq 4$, we obtain
$$ \begin{cases}
(inv_{p}\mathcal B_1(\mu_3),inv_{p}\mathcal B_2(\mu_3))=(0,\frac{1}{2})\\
(inv_{p}\mathcal B_1(\mu_4),inv_{p}\mathcal B_2(\mu_4))=(\frac{1}{2},\frac{1}{2})\\
\end{cases}
$$
Now assuing $(\frac{a}{p})=-1$.
Let $\eta_1=\mu_1,\eta_2=\mu_2$, we have\\
$$\begin{cases}
(inv_{p}\mathcal B_1(\eta_1),inv_{p}\mathcal B_2(\eta_1))=(0,0)\\
(inv_{p}\mathcal B_1(\eta_2),inv_{p}\mathcal B_2(\eta_2))=(\frac{1}{2},0)\\
\end{cases}
$$
Let
$$ \xi_3=(e-2,\alpha,\alpha) \quad  \xi_4=(f-2,\beta,\beta) $$
where $\alpha^2=ae, \beta^2=af $,one can see that $\xi_3$ and $\xi_4$ are smooth points of $U(\Bbb F_{p})$ by lemma 3.2, hence there exist $\eta_{3},\eta_{4} \in U(\mathbb{Z}_p)$
such that $\eta_{i}\equiv \xi_{i} \bmod p$ for $3 \leq i\leq 4$, we obtain
$$ \begin{cases}
(inv_{p}\mathcal B_1(\eta_3),inv_{p}\mathcal B_2(\eta_3))=(0,\frac{1}{2})\\
(inv_{p}\mathcal B_1(\eta_4),inv_{p}\mathcal B_2(\eta_4))=(\frac{1}{2},\frac{1}{2})\\
\end{cases}
$$
The proposition is established.
\end{proof}
As corollary, we obtain Theorem 1.1 .

\begin{lemma}
Given $ a,m \in \mathbb{Z} $, the equation $ax^2+y^2+z^2-xyz=m$
has solutions in $(\mathbb{Z}_p)^3$ for all primes p except for the following two cases:
\begin{enumerate}[(i)]
\item $a \equiv 1 \mod 4,  m \equiv 3 \mod 4 $.
\item $a \equiv 1 \mod 3,  m \equiv \pm 3 \mod 9$.\\
\end{enumerate}
\end{lemma}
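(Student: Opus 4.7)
The plan is to verify $\mathbb{Z}_p$-solvability prime by prime, splitting into $p\ge 5$, $p=3$, and $p=2$. In each regime the workhorse is Hensel's lemma: I will exhibit a sufficiently refined approximate solution modulo $p^n$ (with $n=1$ for $p\ge 5$, and $n=2$ for the small primes $p=2,3$, where the obstruction to integral solvability first appears), verify smoothness, and lift to $\mathbb{Z}_p$.

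For $p\ge 5$, the plan is to slice by $z=y$, reducing to the plane equation $(2-x)y^2=m-ax^2$ in $(x,y)\in\mathbb{F}_p^2$. For each $x\ne 2$ in $\mathbb{F}_p$, solvability in $y$ amounts to $(m-ax^2)/(2-x)$ being a square; a character-sum or elementary counting argument (using $|(\mathbb{F}_p^\times)^2|=(p-1)/2$) then shows that such an $x$ exists as soon as $p\ge 5$. The singular locus of $U_{\mathbb{F}_p}$ is finite: the equations $2ax-yz=2y-xz=2z-xy=0$ force either $x=\pm 2$ with $y^2=\pm 4a$, or the origin (singular only when $p\mid m$). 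By choosing the free parameter to avoid these, one lands in the smooth locus and lifts via Hensel.

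For $p=3$ and $p=2$, the approach is direct residue enumeration. At $p=3$: if $a\not\equiv 1\pmod 3$, a case-by-case choice of substitution (e.g.\ $(x,y,z)=(0,y_0,z_0)$ with $y_0^2+z_0^2\equiv m\pmod 3$ when $m\not\equiv 0$, or $(1,1,2)$ when $a\equiv 0$ and $m\equiv 0$) produces a smooth mod-$3$ point that lifts. If $a\equiv 1\pmod 3$, I compute the image of $x^2+y^2+z^2-xyz$ modulo $9$ by partitioning $(x,y,z)\pmod 3$: when all three are units, $x^2+y^2+z^2\in\{0,3,6\}$ and $xyz\in(\mathbb{Z}/9)^\times$, whose differences fill $(\mathbb{Z}/9)^\times$; when exactly one is divisible by $3$, the image is $\{2,5,8\}$; when at least two are divisible by $3$, the image is $\{0,1,4,7\}$. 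The union is $(\mathbb{Z}/9\mathbb{Z})\setminus\{3,6\}$, giving the obstruction precisely for $m\equiv\pm 3\pmod 9$. The analogous enumeration at $p=2$: for $a\equiv 1\pmod 4$, splitting $(x,y,z)$ by parity shows that $x^2+y^2+z^2-xyz\pmod 4$ takes only the values $\{0,1,2\}$, giving the obstruction for $m\equiv 3\pmod 4$; for $a\not\equiv 1\pmod 4$, explicit smooth mod-$2$ solutions can be exhibited directly.

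The main obstacle is the $p=3$ case. The obstruction is invisible mod $3$ and only surfaces mod $9$, so the enumeration must be carried out at that finer level, and within it the three sub-cases (all units, exactly one divisible by $3$, at least two divisible by $3$) must each be handled separately. Additionally, in the non-obstructed direction one must arrange that the mod-$9$ solution constructed lies at a mod-$3$ smooth point so Hensel applies without a quantitative refinement; in a few edge cases (e.g.\ $a,m\equiv 0\pmod 3$) the naive substitutions fail smoothness and a deliberate alternative such as $(1,1,2)$ must be used in their place.
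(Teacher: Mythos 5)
Your overall architecture (approximate solutions plus Hensel's lemma prime by prime, residue enumeration modulo $4$ and modulo $9$ to locate the two obstructions) matches the paper's, and your computations showing that $ax^2+y^2+z^2-xyz$ omits $3 \bmod 4$ when $a\equiv 1\bmod 4$ and omits $\pm 3\bmod 9$ when $a\equiv 1\bmod 3$ are correct. The gaps are all in the positive direction. For $p\ge 5$, your slice $z=y$ leads to $(2-x)y^2=m-ax^2$, which after clearing denominators is a plane cubic; a Weil/Hasse count gives only about $p-2\sqrt{p}$ points and is inconclusive for $p=5,7$, and the promised ``elementary counting argument'' is never supplied. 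You would also have to treat separately the degenerate configurations ($p\mid a$, $p\mid m$, $m\equiv 4a$, the cubic failing to be square-free, and the $y=0$ points, which are singular when $p\mid a$). The paper sidesteps all of this with explicit points in each residue class ($(0,t,s)$ with $t^2+s^2=m$, $(1,0,t)$, $(3,t,2t)$, $(2,1,1)$, \dots); your route may be salvageable, but as written it is an unproved claim precisely at the small primes where it is least obvious.

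Two of your lifting claims at the small primes are false as stated. When $a\equiv 1\bmod 3$ and $9\mid m$, your own mod-$9$ partition shows that every solution has all three coordinates divisible by $3$, so the reduction mod $3$ is the singular origin and there is \emph{no} smooth mod-$3$ point to ``arrange''; naive Hensel cannot apply, and you need the paper's rescaling $(x,y,z)=(3x_0,3y_0,3z_0)$, which reduces the problem to $ax^2+y^2+z^2-3xyz=m/9$ (or else a quantitative Newton iteration at higher precision). Similarly, when $a$ and $m$ are both odd the only solutions mod $2$ are $(1,0,0),(0,1,0),(0,0,1)$, at which every partial derivative is even, so ``explicit smooth mod-$2$ solutions'' do not exist and your stated precision $n=2$ (i.e.\ mod $4$) is insufficient: the Hensel condition $|f(P)|_2<|f'(P)|_2^2$ forces you to exhibit approximate solutions modulo $8$ at which some partial derivative has $2$-adic valuation exactly $1$, which is exactly the mod-$8$ case analysis ($a\equiv m\bmod 8$, $a\equiv m-4\bmod 8$, $a\equiv 3\bmod 4$ with $m\equiv 1$ or $5\bmod 8$) that occupies most of the paper's $p=2$ argument.
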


\begin{proof}
We break up the proof into several cases. Let \\
$$ \begin{cases}
f=ax^2+y^2+z^2-xyz-m\\
f_x=2ax-yz\\
f_y=2y-xz\\
f_z=2z-xy\\
\end{cases}
$$
For $p>3$,we have
\begin{enumerate}[(i)]
\item $p\nmid a,p\mid m$. If $p\equiv1\mod4$, there exists $(0,t,s)\in(\Bbb F_{p})^3$, such that $f(0,t,s)\equiv 0 \mod p,
f_y(0,t,s)\not \equiv 0 \mod p$, hence f has a zero in $(\mathbb{Z}_p)^3$.\\
If $p\equiv 3 \mod 4$,we first consider$(\frac{a}{p})=-1$,there exists $(1,0,t)\in(\Bbb F_{p})^3$,such that $f(1,0,t)\equiv 0 \mod p,
f_z(1,0,t)\not \equiv 0 \mod p$, hence f has a zero in $(\mathbb{Z}_p)^3$. Now suppose $(\frac{a}{p})=1$,
there exists $(3,t,2t)\in(\Bbb F_{p})^3$,such that $f(3,t,2t)\equiv 0 \mod p,f_y(3,t,2t)\not \equiv 0 \mod p$,
hence f has a zero in $(\mathbb{Z}_p)^3$.

\item $p\nmid a, p\nmid m$. If $(\frac{m}{p})=-1$, since $p>3$, there exists $(0,t,s)\in(\Bbb F_{p})^3$,such that $f(0,t,s)\equiv 0 \mod p,
f_y(0,t,s)\not \equiv 0 \mod p$, hence f has a zero in $(\mathbb{Z}_p)^3$.\\
If $(\frac{m}{p})=1$,there exists $(0,t,0)\in(\Bbb F_{p})^3$,such that $f(0,t,0)\equiv 0 \mod p,f_y(0,t,0)\not \equiv 0 \mod p$,
hence f has a zero in $(\mathbb{Z}_p)^3$.
\item $p\mid a, p\mid m$. There exists $(2,1,1)\in(\Bbb F_{p})^3$, such that $f(2,1,1)\equiv 0 \mod p,
f_x(2,1,1)\not \equiv 0 \mod p$, hence f has a zero in $(\mathbb{Z}_p)^3$.
\item $p\mid a, p\nmid m$. An argument similar to (ii), one can prove f has a zero in $(\mathbb{Z}_p)^3$.
\end{enumerate}
For $p=3$, we have
\begin{enumerate}[(i)]
\item $3 \nmid a,3\mid m$. If $a \equiv 1\mod 3$, f has only zero $(0,0,0) \in (\Bbb F_{3})^3$. So f has no zeros in $(\mathbb{Z}_3)^3$
when $m \equiv \pm 3 \mod 9$.
Now assume $9 \mid m$,one can easily check the equation $ax^2+y^2+z^2-3xyz=\frac{m}{9}$ has a solution $(x_0,y_0,z_0) \in (\mathbb{Z}_3)^3$ using Hensel lemma.
Hence $f(3x_0,3y_0,3z_0)=0$.\\
If $a \equiv 2 \mod 3$, since $f(1,1,0)\equiv 0 \mod 3,f_y(1,1,0)\not \equiv 0 \mod 3$, f has a zero in $(\mathbb{Z}_3)^3$.
\item $3 \nmid a,3\nmid m$. If $m \equiv 1 \mod 3$, since $f(0,1,0)\equiv 0 \mod 3,f_y(0,1,0)\not \equiv 0 \mod 3$,
f has a zero in $(\mathbb{Z}_3)^3$.\\
If $m \equiv 2 \mod 3$, since $f(0,1,1)\equiv 0 \mod 3,f_y(0,1,1)\not \equiv 0 \mod 3$,f has a zero in $(\mathbb{Z}_3)^3$.
\item $3 \mid a,3\mid m$. Since $f(2,1,1)\equiv 0 \mod 3,f_x(2,1,1)\not \equiv 0 \mod 3$,f has a zero in $(\mathbb{Z}_3)^3$.
\item $3 \mid a,3\nmid m$. If $m \equiv 1 \mod 3$, since $f(0,1,0)\equiv 0 \mod 3,f_y(0,1,0)\not \equiv 0 \mod 3$,
f has a zero in $(\mathbb{Z}_3)^3$.\\
If $m \equiv 2 \mod 3$, since $f(0,1,1)\equiv 0 \mod 3,f_y(0,1,1)\not \equiv 0 \mod 3$, f has a zero in $(\mathbb{Z}_3)^3$.
\end{enumerate}
For $p=2$, we have\\
\begin{enumerate}[(i)]
\item $2 \nmid a,2\mid m$. Since $f(1,1,1)\equiv 0 \mod2,f_y(1,1,1)\not \equiv 0 \mod 2$, f has a zero in $(\mathbb{Z}_2)^3$.
\item $2 \nmid a,2\nmid m$. If $a\equiv m\mod8$, since $f(1,0,0)\equiv 0 \mod 8,ord_{2}(f_x(1,0,0))=1$, f has a zero in $(\mathbb{Z}_2)^3$.
If $a\equiv m-4\mod8$, since $f(1,2,0)\equiv 0 \mod8, ord_{2}(f_x(1,2,0))=1$, f has a zero in $(\mathbb{Z}_2)^3$.\\
If $a\equiv 3\mod4, m\equiv 1\mod4$, we first consider $m\equiv 1\mod8$, since $f(0,1,0)\equiv 0 \mod 8, ord_{2}(f_y(0,1,0))=1$,
f has a zero in $(\mathbb{Z}_2)^3$. Now suppose $m\equiv 5\mod8$, since $f(0,1,2)\equiv 0 \mod 8, ord_{2}(f_y(0,1,2))=1$,
f has a zero in $(\mathbb{Z}_2)^3$.\\
If $a\equiv 1\mod4, m\equiv 3\mod4$, note that all solutions of $f\equiv 0\mod2$ are $(1,0,0),(0,1,0)$ and $(0,0,1)$.
If we take for $(1,0,0)$, this implies $a\equiv m\mod4$,a contradiction. If we take for $(0,1,0)$ or $(0,0,1)$, this implies $m\equiv 1\mod4$, a contradiction. So f has no zero in $(\mathbb{Z}_2)^3$ in this case.
\item $2 \mid a, 2\mid m$. Since $f(0,1,1)\equiv 0 \mod 2,f_x(1,1,1)\not \equiv 0 \mod 2$, f has a zero in $(\mathbb{Z}_2)^3$.
\item $2 \mid a, 2\nmid m$. Since $f(1,1,1)\equiv 0 \mod 2,f_x(1,1,1)\not \equiv 0 \mod 2$, f has a zero in $(\mathbb{Z}_2)^3$.
\end{enumerate}
\end{proof}

\begin{proposition}
  Let $U$ be the scheme over $\Bbb Z$ given by
  \begin{equation}   ax^2+y^2+z^2-xyz=4a+2d^2  \end{equation}
  where $a,d$ are odd integers such that $(a,d)=1$, $3 \nmid (a-1)$, $\sqrt{a} \notin Q$,
$p \equiv \pm 1 \mod 8 $ or $(\frac{a}{p})=-1$ for $p\mid d$.
Then there is a Brauer-Manin obstruction to the integral Hasse principle for U.
\end{proposition}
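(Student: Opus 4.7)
The strategy is to evaluate the single algebraic class $\mathcal B_1=(x^2-4,m-4a)\in \Br_1(U)$ provided by the proof of Proposition~2.3 on adelic points, and to show that $\sum_v\inv_v\mathcal B_1(T_v)=\tfrac12$ for every $(T_v)\in U(A_{\mathbb Z})$. Since $m-4a=2d^2$ and $d^2$ is a square locally at every place, this class equals $(x^2-4,2)$ in $\Br(U)$, so the only potentially nonzero local contribution can come from $v=2$.

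First I verify that Proposition~2.3 applies and that $U(A_{\mathbb Z})\neq\emptyset$. With $a,d$ odd and coprime, I will check that none of $a,m,2,am,2a,2m,2am$ is a rational square; the only non-obvious case is $2m=4(2a+d^2)$, which fails to be a square because $2a=k^2-d^2=(k-d)(k+d)$ has no solution with $a,d$ both odd (a quick parity check modulo $4$). Hence $[\mathbb Q(\sqrt a,\sqrt m,\sqrt 2):\mathbb Q]=8$ and Proposition~2.3 yields $\mathcal B_1\in\Br_1(U)$. For local solvability I invoke Lemma~3.5: $m\equiv 2\pmod 4$ rules out its case~(i) and $3\nmid(a-1)$ rules out case~(ii); the real point $(2,d\sqrt 2,0)$ handles $\mathbb R$.

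Next, I dispose of every place $v\neq 2$. At $v=\infty$, $m-4a=2d^2>0$, so the Hilbert symbol is trivial. At odd $p\nmid d$, Lemma~3.1 gives $\inv_p\mathcal B_1=0$. At odd $p\mid d$, $\ord_p(m-4a)$ is even and the square class of $m-4a$ in $\mathbb Q_p^\times/(\mathbb Q_p^\times)^2$ coincides with that of $2$; by hypothesis, either $p\equiv\pm 1\pmod 8$, so $2\in(\mathbb Q_p^\times)^2$ and $\mathcal B_1$ vanishes locally, or $(a/p)=-1$, in which case Lemma~3.3 directly gives $\inv_p\mathcal B_1=0$.

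The main obstacle is the $2$-adic computation, which I would carry out by a parity analysis modulo $8$. The plan is to show that for every $T=(x,y,z)\in U(\mathbb Z_2)$, $x$ is either odd or $\equiv 4\pmod 8$. Indeed, $(x,y,z)\equiv(0,0,0)\pmod 2$ is ruled out because $\ord_2(ax^2+y^2+z^2-xyz)\ge 2$ while $\ord_2 m=1$; the mod~$2$ equation then forces that whenever $x$ is even, $y$ and $z$ are both odd, and pushing this to mod~$8$ (using $d^2\equiv y^2\equiv z^2\equiv 1\pmod 8$) narrows $x$ down to $\equiv 4\pmod 8$. In the odd case one has $x^2-4\equiv 5\pmod 8$; in the even case $x^2-4=4u$ with $u\equiv 3\pmod 8$. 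In either situation a direct computation of the Hilbert symbol gives $(x^2-4,2)_2=-1$, so $\inv_2\mathcal B_1(T)=\tfrac12$ uniformly on $U(\mathbb Z_2)$. Summing all local invariants then yields the Brauer--Manin obstruction; the oddness of both $a$ and $d$ is used crucially precisely in this last step.
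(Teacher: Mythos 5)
Your proposal is correct and follows essentially the same route as the paper: the same class $(x^2-4,2)$, local solvability via Lemma~3.5, vanishing of the invariant at odd $p\nmid d$ by Lemma~3.1, the same dichotomy ($2\in(\mathbb{Q}_p^\times)^2$ versus Lemma~3.3) at $p\mid d$, positivity at infinity, and a constant invariant $\tfrac12$ at $p=2$. The only divergence is cosmetic: at $p=2$ the paper uses the three equivalent representatives $(x^2-4,2)=(y^2-4a,2)=(z^2-4a,2)$ together with the observation that some coordinate is a $2$-adic unit (giving a first argument $\equiv 5 \bmod 8$ immediately), whereas you work with $(x^2-4,2)$ alone and run a mod-$8$ case analysis forcing $x$ odd or $x\equiv 4 \bmod 8$; both computations are valid.
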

\begin{proof}
By lemma 3.5, we have $U(A_\mathbb Z) \ne \emptyset$ .
Let $$\mathcal B=(x^2-4,2)=(z^2-4a,2)=(y^2-4a,2)$$
 we will show that for each point $T \in U(\mathbb{Z}_p)$, we have
\begin{equation}
  \inv_p \mathcal B(T) =
  \begin{cases}
    1/2 &\mbox{if } p = 2, \\
    0  &\mbox{otherwise,}
  \end{cases}
\end{equation}
If $p \nmid 2d^2$ the claim follows from Lemma 3.1 . If $p \mid d$ and $p \equiv \pm 1 \bmod 8$ ,we have $2 \in {Q_p^\times}^2$.
Thus $2d^2 \in {Q_p^\times}^2$.If $p \mid d$ and $(\frac{a}{p})=-1$, the claim follows from lemma 3.3. Finally, since $m - 4a > 0$,
the claim is trivial for $p=\infty $ . It remains to examine $p = 2$.\\
Assme now $p=2$. Let $T \in U(\mathbb{Z}_2)$, one easily see that there is at least one coordinate of T belonging to ${\mathbb{Z}_2^\times}$.
A simple Hilbert symbol calculation implies the claim for $p=2$.\\
\end{proof}

\begin{proposition}
Let $U$ be the scheme over $\Bbb Z$ given by
\begin{equation}   ax^2+y^2+z^2 -xyz=4a+3d^2  \end{equation}
where a is an even integer such that $a \equiv 1 \mod 3$ and $\sqrt{a} \notin Q$,
$ p \equiv \pm 1  \mod 12$ or $(\frac{a}{p})=-1$ for $p\mid d$.
When $\sqrt{4a+3d^2} \notin Q $, there is a Brauer-Manin obstruction to the integral Hasse principle for U.
\end{proposition}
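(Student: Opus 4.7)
The plan is to exhibit a Brauer class on $U$ whose sum of local invariants is the constant $\tfrac12$ on every adelic point. Since $m-4a=3d^2$ equals $3$ modulo squares, Proposition~2.3 supplies
\[
\mathcal B\;=\;(x^2-4,\,m-4a)\;=\;(y^2-4a,\,m-4a)\;=\;(z^2-4a,\,m-4a)\;\in\;\Br(U),
\]
and in each Hilbert symbol $m-4a$ may be replaced by $3$. Note first that the hypothesis ``$p\equiv\pm1\bmod 12$ or $(\frac{a}{p})=-1$ for $p\mid d$''---which concerns odd primes---applied to $p=3$ forces $3\nmid d$, because $3\not\equiv\pm 1\bmod 12$ and $(\frac{a}{3})=1$ from $a\equiv 1\bmod 3$. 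With $3\nmid d$ one checks that $\sqrt a,\ d\sqrt 3,\ \sqrt{4a+3d^2}$ are independent modulo squares, so $[\mathbb Q(\sqrt a,\sqrt m,\sqrt{m-4a}):\mathbb Q]=8$ and Proposition~2.3 applies. I would first check $U(A_{\mathbb Z})\ne\emptyset$ via Lemma~3.5: $a$ even excludes case (i); and $m\equiv 4a+3\bmod 9\in\{7,1,4\}\bmod 9$ (using $a\equiv 1\bmod 3$ and $3d^2\equiv 3\bmod 9$) excludes case (ii); a real point is $(2,t,t-d\sqrt 3)$ for any $t\in\mathbb R$.

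Next I would compute $\inv_v\mathcal B(T)$ at every place $v\ne 3$ and show it vanishes. At $v=\infty$ one has $m-4a=3d^2>0$, so Lemma~3.1 gives $0$; at odd $p\nmid 3d$, Lemma~3.1 again gives $0$. For $p\mid d$ (necessarily $p\ne 2,3$): if $p\equiv\pm 1\bmod 12$ then $3\in(\mathbb Q_p^\times)^2$ by quadratic reciprocity, so $m-4a$ is a square and $\mathcal B$ is trivial at $p$; otherwise $(\frac{a}{p})=-1$, and either $3\in(\mathbb Q_p^\times)^2$ (again trivial) or not, in which case Lemma~3.3 applies since $\ord_p(m-4a)=2\ord_p(d)$ is even, and yields $\inv_p\mathcal B(T)=0$. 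At $p=2$, the evenness of $a$ gives $4a\equiv 0\bmod 8$, so $y^2-4a\equiv y^2\bmod 8$ and similarly for $z^2-4a$; a straightforward Hilbert symbol calculation (in the style of the proof of Proposition~3.6) shows that every $T\in U(\mathbb Z_2)$ has at least one coordinate in $\mathbb Z_2^\times$ and the corresponding representative of $\mathcal B$ evaluates to $(u,3)_2$ with $u\equiv 1$ or $5\bmod 8$, hence $\inv_2\mathcal B(T)=0$.

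The crux, which I expect to be the main obstacle, is $p=3$; I would show $\inv_3\mathcal B(T)=\tfrac12$ for every $T\in U(\mathbb Z_3)$ as follows. Rewrite the defining equation as $(2z-xy)^2=(x^2-4)(y^2-4a)+12d^2$. Using $a\equiv 1\bmod 3$ and $3\nmid d$ (so $12d^2\equiv 3\bmod 9$), suppose $3\nmid x$ and $3\nmid y$: then $3\mid (x^2-4)$ and $3\mid (y^2-4a)$, hence the right-hand side is $\equiv 3\bmod 9$, contradicting that squares modulo $9$ lie in $\{0,1,4,7\}$. The symmetric case $3\nmid x,\ 3\nmid z$ is excluded similarly. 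Therefore either (a)~$3\mid x$, in which case $x^2-4\equiv 2\bmod 3$ is a $3$-adic unit and non-residue, giving $(x^2-4,3)_3=(\tfrac{2}{3})=-1$; or (b)~$3\nmid x$ but $3\mid y$ and $3\mid z$, and reducing the original equation modulo $9$ yields $a(x^2-4)\equiv 3d^2\equiv 3\bmod 9$; since $a\equiv 1\bmod 3$ and $3\mid (x^2-4)$, the correction $3k(x^2-4)$ vanishes mod $9$, forcing $x^2-4\equiv 3\bmod 9$, so $\ord_3(x^2-4)=1$ with unit part $\equiv 1\bmod 3$, and $(x^2-4,3)_3=(-1)^1\cdot(\tfrac{1}{3})=-1$. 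Either way $\inv_3\mathcal B(T)=\tfrac12$, and therefore $\sum_v\inv_v\mathcal B(T)=\tfrac12\ne 0$ on every adelic point, giving the desired Brauer--Manin obstruction to the integral Hasse principle.
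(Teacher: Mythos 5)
Your proof is correct and follows essentially the same route as the paper: the same class $\mathcal B=(x^2-4,3)$, the same appeals to Lemmas 3.1, 3.3 and 3.5 away from $2$ and $3$, and at $p=3$ a mod-$9$ analysis that is only a minor reorganization of the paper's observation that two coordinates of any $\mathbb{Z}_3$-point lie in $3\mathbb{Z}_3$ (you instead evaluate $(x^2-4,3)_3$ in both cases, which also works). The only quibble is your unverified aside that $[\mathbb{Q}(\sqrt{a},\sqrt{m},\sqrt{m-4a}):\mathbb{Q}]=8$ --- the stated hypotheses do not obviously rule out, say, $am$ being a square --- but this is harmless, since $\mathcal B$ lies in $\Br(U)$ by the codimension-two argument in the proof of Proposition 2.3 independently of that degree, and the local invariant computation is all that the obstruction requires.
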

\begin{proof}
One can see $U(A_\mathbb Z) \ne \emptyset$ by lemma 3.5.\\
Let $\mathcal B=(x^2-4,3)=(z^2-4a,3)=(y^2-4a,3)$, we will show that for each point $T \in U(\mathbb{Z}_p)$, we have
\begin{equation}
  \inv_p \mathcal B(T) =
  \begin{cases}
    1/2 &\mbox{if } p = 3, \\
    0  &\mbox{otherwise,}
  \end{cases}
\end{equation}
so that $\mathcal B$  gives an obstruction to the Hasse principle.\\
If $p \nmid 6d^2$ the claim follows from Lemma 3.1 . If $p \mid d$ and $p \equiv \pm 1 \bmod 12$ ,we have $3 \in {Q_p^\times}^2$.
Thus $3d^2 \in {Q_p^\times}^2$.If $p \mid d$ and $(\frac{a}{p})=-1$, the claim follows from lemma 3.3. Finally, since $m - 4a > 0$,
the claim is trivial for $p=\infty $ . It remains to examine $p = 2, 3$.\\
Assme now $p=2$. Let $T \in U(\mathbb{Z}_2)$, one easily see that there is at least one coordinate of T belonging to ${\mathbb{Z}_2^\times}$.
A simple Hilbert symbol calculation implies the claim for $p=2$.\\
For $p=3$, note that  $ax^{2}+y^{2}+z^{2}-xyz=4a+3d^2$ is equivalent to the following equations
$$\begin{cases}
(2z-xy)^2-12d^2=(x^2-4)(y^2-4a)\\
(2ax-zy)^2-3d^2y^2=(z^2-4a-3d^2)(y^2-4a)
\end{cases}
$$
Then for any $P \in U(\mathbb{Z}_3)$, there are two coordinates of $P$ belonging to $3\mathbb{Z}_3$. We can assume $ x,y \in 3\mathbb{Z}_3 $,
since $(x^2-4,3)_3=(y^2-4a,3)_3=\frac{1}{2}$, one concludes that  $\inv_3 \mathcal B(P)= \frac{1}{2}$.
The proposition is established.
\end{proof}
\begin{proposition}
  Let  U be the scheme over $\Bbb Z$ given by
  \begin{equation}   ax^2+y^2+z^2 -xyz=4a+6d^2  \end{equation}
  where $4 \mid a, a \equiv 1 \bmod 3$ and $\sqrt{a} \notin Q$,
  $d \in \Bbb Z$ whose prime divisors are congruent to $\pm 1  \mod 12$ and $\pm 1  \mod 8$ or $\pm 5  \mod 12$ and $\pm 3  \mod 8$.
Then there is a Brauer-Manin obstruction to the integral Hasse principle for U.
 \end{proposition}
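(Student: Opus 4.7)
The plan is to exhibit a Brauer--Manin obstruction via the class
$$ \mathcal{B} := (x^2-4,\,6) = (y^2-4a,\,6) = (z^2-4a,\,6) \in \Br(U), $$
whose three representations agree on $U$ through the identities $(2z-xy)^2-24d^2=(x^2-4)(y^2-4a)$ and $(2ax-yz)^2-24ad^2=(y^2-4a)(z^2-4a)$ (since $m-4a=6d^2$). The goal is to compute $\inv_p \mathcal{B}(T)$ at every completion and show the sum equals $\tfrac12$, in exact analogy with Propositions 3.7 and 3.8.

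First I verify $U(\BA_\BZ) \neq \emptyset$ via Lemma 3.5. The hypothesis $4\mid a$ immediately excludes case (i). The hypotheses on the prime divisors of $d$ force $3 \nmid d$, so a direct mod-$9$ computation of $m=4a+6d^2$ shows $m \equiv 1, 4,$ or $7 \pmod 9$, excluding case (ii). I then dispatch the places $p \nmid 6d^2$ (including $p=\infty$, which is handled by $6d^2>0$) using Lemma 3.1. For $p \mid d$, the hypothesis on prime divisors of $d$ ensures that $\bigl(\tfrac{2}{p}\bigr)$ and $\bigl(\tfrac{3}{p}\bigr)$ have the same sign, so $\bigl(\tfrac{6}{p}\bigr)=+1$; hence $6d^2 \in (\mathbb{Q}_p^\times)^2$ and $\mathcal{B}$ is locally trivial.

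The substantive computations are at $p=2$ and $p=3$. At $p=2$: since $4\mid a$ and $d$ is odd we have $6d^2 \equiv 6 \pmod 8$, and successive reductions of the equation modulo $2$, $4$, and $8$ force any $T=(x,y,z) \in U(\mathbb{Z}_2)$ to have $y,z$ odd and $x \equiv 4 \pmod 8$. Writing $x^2-4=4u$ yields $u \equiv 3 \pmod 8$, and the Hilbert-symbol calculation
$$ (u,6)_2 = (u,2)_2 \cdot (u,3)_2 = (-1)(-1) = 1 $$
gives $\inv_2 \mathcal{B}(T)=0$. At $p=3$: I show that for any $T \in U(\mathbb{Z}_3)$, exactly two of the three coordinates lie in $3\mathbb{Z}_3$. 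Since $a \equiv 1 \pmod 3$, the equation reduces to $x^2+y^2+z^2-xyz \equiv 1 \pmod 3$, and the configurations ``all coordinates units'', ``exactly one in $3\mathbb{Z}_3$'', and ``all three in $3\mathbb{Z}_3$'' must each be eliminated. The first (the key case) uses the identity $(2z-xy)^2-24d^2 = (x^2-4)(y^2-4a)$: its right-hand side has $3$-adic valuation at least $2$, but $24d^2 \equiv 6 \pmod 9$ is not a square modulo $9$, forcing the left-hand side to have valuation $0$. The other two configurations fail by direct mod-$3$ and mod-$9$ incompatibility. Once, say, $x \in 3\mathbb{Z}_3$, we have $x^2-4 \equiv -1 \pmod 3$ a $3$-adic unit, and the standard odd-prime formula gives $(x^2-4,\,6)_3 = \bigl(\tfrac{-1}{3}\bigr) = -1$, so $\inv_3 \mathcal{B}(T) = \tfrac12$.

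Summing over all places, $\sum_v \inv_v \mathcal{B}(T) = \tfrac12 \ne 0$ for every $T \in U(\BA_\BZ)$, yielding the obstruction. The main obstacle is the analysis at $p=3$, and specifically the elimination of the ``all three coordinates units mod $3$'' configuration: this is precisely where the quadratic identity attached to the cubic surface together with the non-squareness of $24d^2$ modulo $9$ does the essential work, while everything else amounts to standard Hilbert-symbol bookkeeping and mod-$p^k$ case analysis.
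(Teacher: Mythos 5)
Your proposal is correct and follows essentially the same route as the paper: the same class $\mathcal{B}=(x^2-4,6)=(y^2-4a,6)=(z^2-4a,6)$, triviality away from $2$ and $3$ via Lemma 3.5, Lemma 3.1 and $(\tfrac{6}{p})=1$ for $p\mid d$, and the mod-$9$ non-square argument at $p=3$ (which the paper delegates to ``a similar argument as in Proposition 3.6'' and you rightly spell out). The only cosmetic difference is at $p=2$, where the paper uses that $z$ is a unit so $z^2-4a\equiv 1\bmod 8$ is a $2$-adic square, while you reach the same $\inv_2\mathcal{B}(T)=0$ by pinning down $x\equiv 4\bmod 8$ and computing $(u,6)_2$ directly.
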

\begin{proof}
Note that if $U(\mathbb{Z}_2) \ne \emptyset$, since $4 \mid a$,for any local solution $T(x,y,z) \in U(\mathbb{Z}_2)$,  y or z is in ${\mathbb{Z}_2^\times}$.
We assume z is in ${\mathbb{Z}_2^\times}$,hence $z^2-4a \equiv 1 \bmod 8$. Thus $z^2-4a \in {Q_2^\times}^2$. Let $\mathcal{B}=(x^2-4,6)=(z^2-4a,6)=(y^2-4a,6)$,
we obtain $\inv_2 \mathcal B(T)=0$.\\
By lemma 3.5, we have $U(A_\mathbb Z) \ne \emptyset$. A similar argument in the proof of Proposition 3.6, we obtain
\begin{equation}
  \inv_p \mathcal B(T) =
  \begin{cases}
    1/2 &\mbox{if } p = 3, \\
    0  &\mbox{otherwise,}
  \end{cases}
\end{equation}
so that $\mathcal B$  gives an obstruction to the Hasse principle.\\
\end{proof}

\begin{proposition}
  Let  U be the scheme over $\Bbb Z$ given by
  \begin{equation}   ax^2+y^2+z^2 -xyz=4a+10d^2  \end{equation}
  where $a,d$ are odd integers such that $(a,d)=1$, $ord_5(a) \geq 2$ ,and $\sqrt{a} \notin Q$,
  the prime divisors of $d$ are congruent to $\pm 1  \mod 8$ and $\pm 1  \mod 5$ or $\pm 3  \mod 8$ and $\pm 2  \mod 5$.
   Then there is a Brauer-Manin obstruction to the integral Hasse principle for U.
 \end{proposition}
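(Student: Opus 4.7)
The plan is to follow the template of Propositions 3.6--3.8 and exhibit a single algebraic Brauer class whose sum of local invariants equals $\tfrac12$. Since $a,d$ are odd, $m=4a+10d^2\equiv 6\pmod 8$, so $m\equiv 2\pmod 4$ and Lemma 3.5(i) is not triggered; a short check of $4a+10d^2\pmod 9$ under $a\equiv 1\pmod 3$ shows $m\in\{1,2,4,5,7,8\}\pmod 9$, so Lemma 3.5(ii) fails as well and $U(A_{\BZ})\ne\emptyset$. The candidate obstructing class is
\[
\mathcal{B}=(x^2-4,\,10)=(y^2-4a,\,10)=(z^2-4a,\,10),
\]
which lies in $\Br_1(U)$ by Proposition 2.3; the three representatives coincide because $(x^2-4)(y^2-4a)=(2z-xy)^2-4\cdot 10d^2$ is a norm from $\BQ(\sqrt{10})$, and cyclic symmetry of the defining equation handles the other equalities. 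The target is
\[
\inv_p\mathcal{B}(T)=\begin{cases}\tfrac12 & p=2,\\ 0 & p\ne 2,\end{cases}
\]
for every $T\in U(\mathbb{Z}_p)$, so that summing local invariants produces the obstruction.

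Vanishing at the easy places is routine: Lemma 3.1 handles $p=\infty$ (since $10d^2>0$) and all $p\nmid 10d^2$. For $p\mid d$, the two congruence conditions on the prime divisors of $d$ both translate to $(10/p)=+1$, so $10\in(\BQ_p^\times)^2$ and $(x^2-4,10)_p=0$. The interesting local step is $p=5$: I exploit $\ord_5(a)\ge 2$ by observing that if both $y,z\in 5\mathbb{Z}_5$, then $y^2+z^2-xyz\in 25\mathbb{Z}_5$, whereas the equation together with $25\mid a$ forces this quantity to be $\equiv 10d^2\pmod{25}$, of $5$-adic valuation exactly $1$ since $5\nmid d$. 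Thus WLOG $y\in\mathbb{Z}_5^\times$, and writing $y^2-4a=y^2(1-4a/y^2)$ with $4a/y^2\in 25\mathbb{Z}_5$ shows $y^2-4a$ is a square in $\mathbb{Z}_5^\times$, hence $(y^2-4a,10)_5=0$. This is the $5$-adic mirror of the $2$-adic argument used in Proposition 3.8.

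The crux is $p=2$. Since $m\equiv 2\pmod 4$, the possibility that all of $x,y,z$ are even is excluded, and the mod-$2$ constraint $x+y+z+xyz\equiv 0\pmod 2$ then forces at least one of $y,z$ to be odd. Assuming WLOG $y$ odd, $y^2\equiv 1\pmod 8$ and $4a\equiv 4\pmod 8$ give $y^2-4a\equiv 5\pmod 8$, whence the standard Hilbert symbol formulas yield
\[
(y^2-4a,\,10)_2=(5,2)_2+(5,5)_2=\tfrac12+0=\tfrac12.
\]
The main technical obstacle I anticipate is the uniform packaging of the $p=2$ case analysis: one must verify that across the four admissible parity patterns of $(x,y,z)$ the same value $\tfrac12$ emerges, using $(y^2-4a,10)_2$ or $(z^2-4a,10)_2$ as appropriate, independently of $a\pmod 4$.
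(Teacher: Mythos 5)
Your proposal is correct and follows essentially the same route as the paper: the same class $(x^2-4,10)$, Lemma 3.5 for local solubility, Lemma 3.1 and the quadratic-residue conditions on $d$ for the easy places, the $\ord_5(a)\ge 2$ trick forcing $y$ or $z\in\mathbb{Z}_5^\times$ at $p=5$, and the mod $8$ computation $u\equiv 5$ giving invariant $\tfrac12$ at $p=2$. You merely supply more detail than the paper at $p=5$ and $p=2$ (e.g.\ ruling out $y,z$ both even via $m\equiv 2\bmod 4$), which is a welcome tightening rather than a different argument.
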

\begin{proof}
By lemma 3.5, one can prove $U(A_\mathbb Z) \ne \emptyset$. Let $\mathcal B=(x^2-4,10)=(z^2-4a,10)=(y^2-4a,10)$,
 we will show that for each point $T \in U(\mathbb{Z}_p)$, we have
\begin{equation}
  \inv_p \mathcal B(T) =
  \begin{cases}
    1/2 &\mbox{if } p = 2, \\
    0  &\mbox{otherwise,}
  \end{cases}
\end{equation}
so that $\mathcal B$  gives an obstruction to the Hasse principle.\\
If $p \nmid 10d^2$ the claim follows from Lemma 3.1 . If $p \mid d$, then $10 \in {Q_p^\times}^2$.
Thus $10d^2 \in {Q_p^\times}^2$. Finally, since $m - 4a > 0$, the claim is trivial for $p=\infty $ . It remains to examine $p = 2, 5$.\\
Assme now $p=5$.  Since $25 \mid a$,for any local solution $T(x,y,z) \in U(\mathbb{Z}_5)$,  y or z is in ${\mathbb{Z}_5^\times}$.
We assume $z$ is in $\mathbb{Z}_5^{\times}$,hence $z^2-4a \in Q_{5}^{{\times}^2}$.
we obtain $\inv_5$ $\mathcal B(T)=0$.\\
For $p=2$,for any local solution $T(x,y,z) \in U(\mathbb{Z}_2)$, there is at least one coordinate of T belonging to ${\mathbb{Z}_2^\times}$.
We assume z is in ${\mathbb{Z}_2^\times}$,hence $z^2-4a \equiv 5 \bmod 8$.
we obtain $\inv_2 \mathcal B(T)=1/2$.\\
so that $\mathcal B$  gives a obstruction to the Hasse principle.\\
\end{proof}

\begin{remark}
 We can take $m=4a+2qd^2$, where $q$ is an odd prime , one easily obtains similar conclusions.
\end{remark}
\begin{proposition}
  Let  U be the scheme over $\Bbb Z$ given by
  \begin{equation}   tq^2x^2+y^2+z^2 -xyz=4tq^2+2q^2d^2  \end{equation}
  where $q$ is an odd prime, t is an odd integer such that $3 \nmid (t-1)$ ,$\sqrt{t} \notin Q $,
  $(t,d)=1$ and the prime divisors of d are congruent to $\pm 1  \mod 8$.
Then there is a Brauer-Manin obstruction to the integral Hasse principle for U.
 \end{proposition}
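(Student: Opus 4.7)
The plan is to exhibit a single Brauer class whose adelic invariants sum to $1/2$, giving a Brauer--Manin obstruction. Write $a = tq^2$, so $m - 4a = 2q^2 d^2 \equiv 2 \pmod{{\BQ^\times}^2}$. Using the identity $(2z-xy)^2 = (x^2-4)(y^2-4a) + 4(m-4a)$, set
\[
\mathcal B \;=\; (x^2-4,\, 2) \;=\; (y^2-4a,\, 2) \;=\; (z^2-4a,\, 2) \;\in\; \Br(U),
\]
which is the sum of the two generators furnished by Proposition 2.3. First I would verify $U(A_\BZ)\neq \emptyset$ via Lemma 3.5: the hypothesis $3 \nmid (t-1)$ combined with $a = tq^2$ rules out case (ii), while $a$ odd and $m \equiv 2 \pmod 4$ rule out case (i).

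Next I would compute $\inv_v \mathcal B$ place by place. At $v = \infty$, $m - 4a > 0$ gives $\inv_\infty \mathcal B = 0$ by Lemma 3.1. At any odd prime $p \neq q$ with $p \nmid d$ we have $(p, m-4a) = 1$, so again $\inv_p \mathcal B = 0$ by Lemma 3.1. At $p \mid d$ with $p \neq q$, the hypothesis $p \equiv \pm 1 \pmod 8$ makes $2$ a square in $\BQ_p$, so $\inv_p \mathcal B = 0$ trivially.

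The main obstacle is the prime $p = q$, where $q^2$ divides both $a$ and $m$, so Lemmas 3.1--3.3 do not apply directly. If $q \mid d$, the hypothesis $q \equiv \pm 1 \pmod 8$ again makes $2$ a square in $\BQ_q$. Otherwise $q \nmid d$, and reducing the defining equation mod $q$ yields $y^2 + z^2 - xyz \equiv 0 \pmod q$, which forces either both $y, z \in \BZ_q^\times$ or both $y, z \in q\BZ_q$. In the unit case, $y^2 - 4a \equiv y^2 \pmod{q^2}$ lies in $\BZ_q^{\times 2}$ (as $q$ is odd), so $\inv_q \mathcal B = (y^2-4a, 2)_q = 0$. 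In the divisible case, write $y = q\tilde y$, $z = q\tilde z$ and divide the equation by $q^2$ to obtain
\[
tx^2 + \tilde y^2 + \tilde z^2 - x\tilde y\tilde z \;=\; 4t + 2d^2,
\]
which has the Proposition 3.6 shape with inner discriminant $2d^2$. Since $q \nmid 2d^2$, Lemma 3.1 applied to this inner equation gives $(\tilde z^2 - 4t, 2)_q = 0$; but $(z^2 - 4a, 2)_q = (q^2(\tilde z^2 - 4t), 2)_q = (\tilde z^2 - 4t, 2)_q$, so $\inv_q \mathcal B = 0$ as required.

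Finally at $p = 2$: since $a$, $q$, $d$ are all odd, $m \equiv 6 \pmod 8$, and the defining equation mod $4$ rules out $T = (x,y,z) \in U(\BZ_2)$ having all three coordinates in $2\BZ_2$. A case-by-case Hilbert symbol computation using whichever of the three equivalent expressions for $\mathcal B$ has first argument a $2$-adic unit yields $\inv_2 \mathcal B(T) = 1/2$ in every case, exactly as in Proposition 3.6. Summing over all places then gives $\sum_v \inv_v \mathcal B(T_v) = 1/2 \neq 0$ for every adelic point, establishing the Brauer--Manin obstruction.
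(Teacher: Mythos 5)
Your proof is correct and follows essentially the same route as the paper's: the same class $\mathcal B=(x^2-4,2)$, the same place-by-place invariant computation, and the same key step at $p=q$ of splitting into the case where $y$ or $z$ is a $q$-adic unit (so $y^2-4a$ or $z^2-4a$ is a unit square) and the case $y,z\in q\BZ_q$, where one divides by $q^2$ and applies Lemma 3.1 to the resulting equation $t\mu_1^2+\mu_2^2+\mu_3^2-\mu_1\mu_2\mu_3=4t+2d^2$. Your write-up is in fact somewhat more explicit than the paper's (e.g.\ the mod $q$ dichotomy and the mod $8$ computation at $p=2$); the only loose point is attributing $\mathcal B$ to Proposition 2.3, whose degree-$8$ hypothesis need not hold here, though membership $\mathcal B\in\Br(U)$ follows from the identity $(2z-xy)^2-4(m-4a)=(x^2-4)(y^2-4a)$ regardless.
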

\begin{proof}
By lemma 3.5, one can prove $U(A_\mathbb Z) \ne \emptyset$.Let $\mathcal B=(x^2-4,2)=(z^2-4a,2)=(y^2-4a,2)$,
 we will show that for each point $T \in U(\mathbb{Z}_p)$, we have
\begin{equation}
  \inv_p \mathcal B(T) =
  \begin{cases}
    1/2 &\mbox{if } p = 2, \\
    0  &\mbox{otherwise,}
  \end{cases}
\end{equation}
so that $\mathcal B$  gives an obstruction to the Hasse principle.\\
If $p \nmid 2q^2d^2$ the claim follows from lemma 3.1 . If $p \mid d$, then $2 \in {Q_p^\times}^2$.
Thus $2q^2d^2 \in {Q_p^\times}^2$. Finally, since $m - 4a > 0$, the claim is trivial for $p=\infty $ .
Note that for any local solution $T(x,y,z) \in U(\mathbb{Z}_2)$, there is at least one coordinate of T belonging to ${\mathbb{Z}_2^\times}$.
we obtain $\inv_2 \mathcal B(T)=1/2$. It remains to examine $p = q$.\\
If y or z is in ${\mathbb{Z}_q^\times}$, we have $y^2-4a \in {Q_q^\times}^2$ or $z^2-4a \in {Q_q^\times}^2$. If not,
for any point $M(x,y,z)\in U(\mathbb{Z}_q)$,let $y=qy',z=qz'$, then $(x, y',z')$ is the solution of the following equation
$$ t\mu_1^2+\mu_2^2+\mu_3^2 -\mu_1\mu_2\mu_3 =4t+2d^2$$
We obtain $\inv_q \mathcal B(M)=0$ by lemma 3.1 .
\end{proof}
\begin{proposition}
  Let  U be the scheme over $\Bbb Z$ given by
  \begin{equation}   -qx^2+y^2+z^2-xyz=-2q  \end{equation}
  where $q$ is an odd prime such that $ q \equiv \pm 3 \bmod 8$ ,
then there is a Brauer-Manin obstruction to the integral Hasse principle for U.
 \end{proposition}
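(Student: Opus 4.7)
The plan is to follow the blueprint of Propositions 3.6--3.9: exhibit a non-trivial Brauer class $\mathcal{B} \in \Br_1(U)/\Br_0(U)$ whose local invariants sum to $\tfrac{1}{2}$ at every adelic integral point. Here $a=-q$, $m=-2q$, and $m-4a=2q>0$. One first checks $[\mathbb{Q}(\sqrt{-q},\sqrt{-2q},\sqrt{2q}):\mathbb{Q}]=8$, since the classes of $-q,-2q,2q$ in $\mathbb{Q}^\times/\mathbb{Q}^{\times 2}$ are independent (their product $4q^3$ is not a square for an odd prime $q$); thus Proposition 2.3 supplies the generators $(x\pm 2, 2q)$, and I take $\mathcal{B}:=(x-2,2q)+(x+2,2q)=(x^2-4,2q)=(y^2+4q,2q)=(z^2+4q,2q)$. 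Adelic non-emptiness follows from Lemma 3.5: case (i) requires $m\equiv 3\pmod 4$, but $m=-2q\equiv 2\pmod 4$; case (ii) can only threaten when $q\equiv 2\pmod 3$ (so $a=-q\equiv 1\pmod 3$), and then $m=-2q \bmod 9 \in \{2,5,8\}$, never $\pm 3$.

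For odd primes $p\ne q$ we have $p\nmid 2q$, so Lemma 3.1 gives $\inv_p\mathcal{B}=0$; the real invariant vanishes since $m-4a>0$. At $p=q$ the key claim is that no $T=(x,y,z)\in U(\mathbb{Z}_q)$ can satisfy both $q\mid y$ and $q\mid z$. Indeed, substituting $y=qy'$, $z=qz'$ and dividing the defining equation by $q$ yields $-x^2+q(y'^2+z'^2)-q^2 x y' z'=-2$, which reduces mod $q$ to $x^2\equiv 2\pmod q$; but the hypothesis $q\equiv\pm 3\pmod 8$ makes $2$ a quadratic non-residue mod $q$, a contradiction. So at least one of $y,z$ lies in $\mathbb{Z}_q^\times$. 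Say $y\in\mathbb{Z}_q^\times$; then $y^2+4q$ is a unit reducing to the nonzero square $y^2$ mod $q$, hence a square in $\mathbb{Q}_q^\times$ by Hensel's lemma, giving $\inv_q\mathcal{B}(T)=0$.

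At $p=2$ I would reduce the defining equation mod $4$ to $-qx^2+y^2+z^2-xyz\equiv 2\pmod 4$ and run a parity case analysis on $(x,y,z)$. The all-even pattern gives $0\equiv 2\pmod 4$; the three single-odd-coordinate patterns produce either $-q\equiv 2\pmod 4$ or $1\equiv 2\pmod 4$; all four are impossible since $q$ is odd. Each surviving pattern forces at least one of $y,z$ into $\mathbb{Z}_2^\times$. For such a $2$-adic unit, say $y$, one has $y^2\equiv 1\pmod 8$ and $4q\equiv 4\pmod 8$, so $y^2+4q\equiv 5\pmod 8$. The Hilbert symbol then factors as $(5,2q)_2=(5,2)_2\,(5,q)_2=(-1)(+1)=-1$, using that $5\not\equiv\pm 1\pmod 8$ and $5\equiv 1\pmod 4$. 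Hence $\inv_2\mathcal{B}(T)=\tfrac{1}{2}$ for every $T$, and summing over all places gives $\tfrac{1}{2}\ne 0$, establishing the Brauer--Manin obstruction. The main obstacle is the parity bookkeeping at $p=2$: confirming that every viable $2$-adic solution has at least one of $y,z$ odd, which is what drives the $\inv_2$ computation; the conceptually important hypothesis $q\equiv\pm 3\pmod 8$ enters only at $p=q$, to eliminate the residual case where both $y$ and $z$ are divisible by $q$.
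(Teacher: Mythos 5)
Your proposal is correct and follows essentially the same route as the paper: the same class $\mathcal{B}=(x^2-4,2q)=(y^2+4q,2q)=(z^2+4q,2q)$, vanishing of $\inv_p$ away from $2$ via Lemma 3.1, the substitution $y=qy'$, $z=qz'$ at $p=q$ forcing $x^2\equiv 2\pmod q$ (impossible since $q\equiv\pm 3\pmod 8$), and the unit-coordinate argument at $p=2$ giving a residue $\equiv 5\pmod 8$ and hence $\inv_2=\tfrac12$. You merely spell out in more detail the adelic solubility check, the degree-$8$ condition justifying $\mathcal{B}\in\Br(U)$, and the mod-$4$ parity analysis at $p=2$, all of which the paper leaves implicit.
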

\begin{proof}
  One can easily check $U(A_\mathbb Z) \ne \emptyset$ by lemma 3.5. Let $\mathcal B=(x^2-4,2q)=(z^2+4q,2q)=(y^2+4q,2q)$,
 we will show that for each point $T \in U(\mathbb{Z}_p)$, we have
\begin{equation}
  \inv_p \mathcal B(T) =
  \begin{cases}
    1/2 &\mbox{if } p = 2, \\
    0  &\mbox{otherwise,}
  \end{cases}
\end{equation}
We only need to consider $p=2,q,\infty.$
Note that for any local solution $T(x,y,z) \in U(\mathbb{Z}_2)$, there is at least one coordinate of T belonging to ${\mathbb{Z}_2^\times}$.
we obtain $\inv_2 \mathcal B(T)=1/2$. Since $ y^2+4q > 0$, the claim is trivial for $p=\infty $ . It remains to examine $p = q$.\\
If y or z is in ${\mathbb{Z}_q^\times}$, we have $y^2+4q \in {Q_q^\times}^2$ or $z^2+4q \in {Q_q^\times}^2$.If not,
for any point $M(x,y,z)\in U(\mathbb{Z}_q)$,let $y=qy',z=qz'$, then $(x, y',z')$ is the solution of the following equation
$$ -\mu_1^2+q\mu_2^2+q\mu_3^2 -q\mu_1\mu_2\mu_3 =-2$$
Thus $x^2 \equiv 2 \bmod q$, a contradiction. We obtain $\inv_q \mathcal B(M)=0$.
\end{proof}

\bigskip

\section {Review of bicyclic group cohomolgy}
Let $G=\mathbb{Z}/n\oplus \mathbb{Z}/m$, with generators $t$ and $s$.Put $N_t:=1+t+\cdots + t^{n-1}$ and $\Delta_t := 1-t$ in $\mathbb{Z}[G]$,
similar put $N_s:=1+s+\cdots + s^{m-1}$ and $\Delta_s := 1-s$ in $\mathbb{Z}[G]$.
For trivial G-module $\mathbb{Z}$ ,we have the following resolution
\begin{equation}
\cdots \mathbb{Z}[G]^4 \xrightarrow{d_2} \mathbb{Z}[G]^3\xrightarrow{d_1}\mathbb{Z}[G]^2\xrightarrow{d_0}
\mathbb{Z}[G].
\end{equation}
where $$ d_2=\begin{pmatrix}
\Delta_t& \Delta_s & 0 & 0 \\
0 & -N_t &N_s  & 0 \\
0 & 0 & \Delta_t &\Delta_s \end{pmatrix},\qquad
d_1=\begin{pmatrix}
N_t & \Delta_s & 0 \\
0 & -\Delta_t & N_s \end{pmatrix},\qquad
d_0=\begin{pmatrix}
\Delta_t & \Delta_s  \\
 \end{pmatrix},
$$
If we are given a G-module M,then applying $Hom_G(-,M)$ to the above complex,the
groups $H^i(G,M)$ are homology groups of the following complex:
$$M \xrightarrow{d^0} M^2 \xrightarrow{d^1} M^3 \xrightarrow{d^2} M^4 \cdots$$
where $$
d^0=\begin{pmatrix}
\Delta_t\\ \Delta_s  \\
 \end{pmatrix}, \qquad
 d^1=\begin{pmatrix}
 N_t & 0  \\
 \Delta_s& -\Delta_t \\
 0 & N_s   \end{pmatrix},\qquad
 d^2=\begin{pmatrix}
 \Delta_t &  0 & 0 \\
 \Delta_s & -N_t & 0 \\
 0 & N_s &\Delta_t\\
 0 & 0 & \Delta_s \end{pmatrix},$$\\

We introduce the notations: $Z^1(G,M):=ker(d^1)$,and $Z^2(G,M):=ker(d^2)$,then we have
$$ \begin{cases} Z^1(G,M)=\{(a,b)\in M^2|N_t(a)=N_s(b)=0, \Delta_s(a)=\Delta_t(b)\}\\
Z^2(G,M)=\{(a,b,c)|a\in M^t,c \in M^s,N_t(b)=\Delta_s(a), N_s(b)=-\Delta_t(c)\}
\end{cases} $$
For subgroup $\langle t \rangle$, we have the following resolution
\begin{equation}
\cdots \mathbb{Z}[t] \xrightarrow{\Delta_t} \mathbb{Z}[t]\xrightarrow{N_t}\mathbb{Z}[t]\xrightarrow{\Delta_t}
\mathbb{Z}[t].
\end{equation}
The injection from $\mathbb{Z}[t]$ to the first factor $\mathbb{Z}[G]$ of $\mathbb{Z}[G]^{i+1}$ induces
the restriction
\begin{center}
   $H^i(G,M) \to H^i(\langle t \rangle,M)$\\
  $(a_0,...a_i) \to  a_0$\\
\end{center}
Similar for subgroup $\langle s \rangle$, the injection from $\mathbb{Z}[s]$ to the last factor $\mathbb{Z}[G]$ of $\mathbb{Z}[G]^{i+1}$ induces
the restriction
\begin{center}
   $H^i(G,M) \to H^i(\langle s \rangle,M)$\\
  $(a_0,...a_i) \to  a_i$\\
\end{center}
\section {Special examples}

\begin{exam}
Let U be an affine variety over a field of characteristic zero defined by the equation
$$ ax^{2}+y^{2}+z^{2}-xyz=m$$
where $a \in k^{\times},a \notin k^2,m \ne 0 ,4a $.
By[4,Proposition 2.2],$Pic(\overline{U})$ is given by the following quotient group
$$ (\oplus^{6}_{i=1}\mathbb{Z}l_{i}\oplus \mathbb{Z}l)/(l-l_j-l_{j+3}:1\le j \le3)\cong \oplus^4_{i=1}\mathbb{Z}{\overline l_i}.$$\\
Here we give explicit condition which $H^1(k,Pic(\overline{U})) \cong \mathbb{Z}/2\oplus\mathbb{Z}/4$, and use methods of Colliot-Th\'el\`ene, D. Kanevsky, J.-J. Sansuc [5]
to describe the 4 torsion elements .\\

\begin{lemma}
  When $[k(\sqrt{a},\sqrt{m},\sqrt{m-4a}):k]=4$ and $\frac{\sqrt{m-4a}}{\sqrt{ma}}\in k$, $H^1(k,Pic(\overline{U})) \cong \mathbb{Z}/2\oplus\mathbb{Z}/4$
\end{lemma}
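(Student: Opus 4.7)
The plan is to reduce the computation to one of Galois cohomology over a biquadratic extension and then to apply the bicyclic resolution recorded in Section~4. First I would unpack the hypothesis: writing $\sqrt{m-4a}=c\sqrt{ma}$ with $c\in k^{\times}$, one sees that $\sqrt{m-4a}\in k(\sqrt{a},\sqrt{m})$, and together with the degree-$4$ assumption this forces $\sqrt{a}$ and $\sqrt{m}$ to be independent in $k^{\times}/k^{\times 2}$. Thus $G:=\Gal(k(\sqrt{a},\sqrt{m})/k)\cong \mathbb{Z}/2\oplus\mathbb{Z}/2$, the $G$-action on $\Pic(\overline{U})$ factors through this quotient, and $H^{1}(k,\Pic(\overline{U}))=H^{1}(G,\Pic(\overline{U}))$. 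Let $t,s\in G$ be the generators specified by $t(\sqrt{a})=-\sqrt{a}$, $t(\sqrt{m})=\sqrt{m}$ and $s(\sqrt{a})=\sqrt{a}$, $s(\sqrt{m})=-\sqrt{m}$; both $t$ and $s$ negate $\sqrt{m-4a}$ while $ts$ fixes it.

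Next I would transport the action of $t$ and $s$ to the basis $\overline{l}_{1},\overline{l}_{2},\overline{l}_{3},\overline{l}_{4}$ of $\Pic(\overline U)$ directly from the calculations in the proof of Proposition~2.3. Since $t$ acts on $\sqrt{a},\sqrt{m},\sqrt{m-4a}$ in the same way as the element $\theta$ appearing there, its matrix is copied verbatim; the generator $s$ equals the composite $\tau\theta$, and a short composition gives $s(\overline{l}_{1})=-\overline{l}_{1}$, $s(\overline{l}_{i})=-\overline{l}_{i}$ for $i=2,3$, and $s(\overline{l}_{4})=\overline{l}_{1}-\overline{l}_{2}-\overline{l}_{3}+\overline{l}_{4}$. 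With the $G$-module structure explicit, I then invoke the bicyclic resolution of Section~4 with $n=m=2$: a $1$-cocycle is a pair $(A,B)\in\Pic(\overline U)^{2}$ with $N_{t}A=N_{s}B=0$ and $\Delta_{s}A=\Delta_{t}B$. Writing $A=\sum a_{i}\overline{l}_{i}$ and $B=\sum b_{i}\overline{l}_{i}$, the first two conditions reduce to $a_{2}+a_{3}+a_{4}=0$ and $b_{4}=0$, and the compatibility condition then determines $a_{4}$, $b_{2}$, $b_{3}$ uniquely in terms of the four free parameters $(a_{1},a_{2},a_{3},b_{1})$. This identifies $Z^{1}(G,\Pic(\overline U))\cong\mathbb{Z}^{4}$.

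Finally I would express the coboundary map $z\mapsto(\Delta_{t}z,\Delta_{s}z)$ in these same coordinates, obtaining a $4\times 4$ integer matrix whose determinant is $\pm 8$. Inspection shows that the gcd of its $2\times 2$ minors is $1$ while the gcd of its $3\times 3$ minors is $2$, so its Smith normal form has invariant factors $(1,1,2,4)$ and hence $H^{1}(G,\Pic(\overline U))\cong \mathbb{Z}/2\oplus\mathbb{Z}/4$. The main obstacle I anticipate is this last Smith normal form step: the restriction maps to the cyclic subgroups $\langle t\rangle$ and $\langle s\rangle$ only detect $2$-torsion, so the $\mathbb{Z}/4$-summand arises entirely from the interaction between the two factors, and the diagonalisation must be carried out carefully enough to distinguish $\mathbb{Z}/2\oplus\mathbb{Z}/4$ from $(\mathbb{Z}/2)^{3}$.
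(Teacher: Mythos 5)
Your proof is correct, and the key numerical claims check out: with $t$ acting as $\theta$ and $s=\tau\theta$ acting by $s(\overline l_i)=-\overline l_i$ for $i\le 3$ and $s(\overline l_4)=\overline l_1-\overline l_2-\overline l_3+\overline l_4$, the cocycle conditions do cut out a rank-four lattice with free coordinates $(a_1,a_2,a_3,b_1)$ (and $a_4=-(a_2+a_3)$, $b_2=a_1+a_2-b_1$, $b_3=a_1+a_3-b_1$), the coboundary matrix has determinant $\pm 8$, and its reduction modulo $2$ has rank $2$, so exactly two invariant factors are even and, their product being $8$ with one dividing the other, they are $2$ and $4$. This mod-$2$ rank argument is in fact a cleaner way to finish than computing gcds of $3\times 3$ minors. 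The route is, however, genuinely different from the paper's. The paper first applies inflation--restriction for the subgroup $\langle\tau\rangle$: from $H^1(\langle\tau\rangle,\Pic(\overline U))\cong\mathbb{Z}/2$, $H^1(\langle\sigma\rangle,\Pic(\overline U)^{\langle\tau\rangle})\cong(\mathbb{Z}/2)^2$ and the vanishing of $H^2(\langle\sigma\rangle,\Pic(\overline U)^{\langle\tau\rangle})$, it obtains an exact sequence $0\to(\mathbb{Z}/2)^2\to H^1(G,\Pic(\overline U))\to\mathbb{Z}/2\to 0$, hence a group of order $8$ containing $(\mathbb{Z}/2)^2$; it then uses the bicyclic cocycle description of Section~4 only to exhibit one explicit class of order $4$ (the one with $y_2$ odd, normalised to $(\overline l_5,\overline l_4)$), which forces $\mathbb{Z}/2\oplus\mathbb{Z}/4$. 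Your single Smith-normal-form computation is more uniform and avoids the spectral-sequence bookkeeping, but the paper's method buys the explicit $4$-torsion cocycle $(\overline l_5,\overline l_4)$, which is not optional in context: the remainder of Example~5.1 lifts precisely this cocycle through $\Div(U_K)$ to construct the non-cyclic Azumaya algebra $\mathcal A$. If you take your route, you should still extract an explicit generator of the $\mathbb{Z}/4$ summand (for instance from the change-of-basis matrices in the Smith normal form) before proceeding.
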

\begin{proof}
Let $G=Gal(k(\sqrt{a},\sqrt{m})/k)$,there exist $\sigma,\tau \in G $,such that
$$ \begin{cases}
\sigma(\sqrt{a})=-\sqrt{a},\ \ \sigma(\sqrt{m})=\sqrt{m},\ \ \sigma(\sqrt{m-4a})=-\sqrt{m-4a},\\
\tau(\sqrt{a})=-\sqrt{a},\ \ \tau(\sqrt{m})=-\sqrt{m},\ \ \tau(\sqrt{m-4a})=\sqrt{m-4a}.
\end{cases}
$$
By computation of proposition 2.3, we have \\
\begin{enumerate}[(i)]
\item $\left \{\begin{array}{l}
\sigma (\overline l_1)=-\overline{l_1} \\
\sigma (\overline l_2)= \overline l_3-\overline l_1 \\
\sigma (\overline l_3)= \overline l_2-\overline l_1 \\
\sigma (\overline l_4)= \overline l_2+\overline l_3-\overline l_1-\overline l_4\\
\end{array} \right.$
\\
\item $\left\{\begin{array}{l}
\tau (\overline l_1)=\overline l_1  \\
\tau (\overline l_2)=\overline l_1-\overline l_3 \\
\tau (\overline l_3)=\overline l_1-\overline l_2  \\
\tau (\overline l_4)=-\overline l_4\\
\end{array}\right.$
\end{enumerate}

Since $$
  Ker(1+\tau)=(\overline l_1-\overline l_3-\overline l_2,\overline l_4) ,\ \ \ \ \
  Im(\sigma-1)=(\overline l_1-\overline l_3-\overline l_2,2\overline l_4), \\
$$
we have $H^1(\langle \tau \rangle,Pic(\overline{U}))\cong \mathbb{Z}/2$.
By computation,we have $Pic(\overline{U})^{\langle \tau \rangle}=(\overline l_1,\overline l_2-\overline l_3)$, and since
$$\begin{cases}
\sigma(\overline l_1)=-\overline l_1,\\
\sigma(\overline l_2-\overline l_3)=-(\overline l_2-\overline l_3), \\
\end{cases}
$$
one concludes that
$$
H^1(\langle \sigma \rangle,Pic(\overline{U})^{\langle \tau \rangle})\cong \mathbb{Z}/2\oplus\mathbb{Z}/2,\quad
H^2(\langle \sigma \rangle,Pic(\overline{U})^{\langle \tau \rangle})=0.
$$
Hence,we have the following sequence
$$ 0\to \mathbb{Z}/2\oplus\mathbb{Z}/2 \to  H^1(G,Pic(\overline{U}))\to(\mathbb{Z}/2)^{\langle \sigma \rangle}\to 0$$
by [6.proposition 3.3.14]. To show $H^1(G,Pic(\overline{U}))$ has 4-torsion elements, we use bicyclic group cohomology .
Now we identify classes in $H^1(G,Pic(\overline{U}))$ with pairs $(a,b) \in Z^1(G,Pic(\overline{U}))$ modulo those of the form $(\Delta_{\sigma}(v),\Delta_{\tau}(v))$,
where
 $$Z^1(G,Pic(\overline{U}))=\{(a,b)\in Pic(\overline{U})^2|(1+\sigma)a=(1+\tau)b=0, \Delta_{\sigma}(b)= \Delta_{\tau}(a)\}.$$
Then any element of $H^1(G,Pic(\overline{U}))$ is the class of
$$ (x_1\overline l_1+x_2(\overline l_3-\overline l_1-\overline l_2)-y_2(\overline l_3-\overline l_4),y_1(\overline l_1-\overline l_2-\overline l_3)+y_2 \overline l_4)$$
where $x_1,x_2,y_1$ and $y_2$ $\in \mathbb{Z}$.
If we let $y_2$ be odd, it's easy to prove it's 4-torsion element.
\end{proof}

\bigskip

\begin{remark}
Using methods of Colliot-Th\'el\`ene,Dasheng Wei,and Fei Xu,we can obtain all 2-torsion elements:
$(x+2,m-4a)$,$\quad(x-2,m-4a)$,\quad$(x^2-4,m-4a)$
\end{remark}

\medskip

We take $x_1=2,x_2=1,y_1=0,y_2=1$,we obtain this class $(\overline l_1+\overline l_4-\overline l_2,\overline l_4)$,
since $\overline l_1+\overline l_4=\overline l_2+\overline l_5$ in $Pic(\overline{U})$,\
$(\overline l_5,\overline l_4)$ is a $4$-torsion element in $H^1(G,Pic(\overline{U}))$.Let $K=k(\sqrt{a},\sqrt{m})$, one has the following commutative diagram of exact sequences
$$
\xymatrix{
0\ar[r] & \Br(k,K) \ar[r]\ar[d]^{=} & \Br(U,K) \ar[r] \ar[d] &  H^1(G,Pic(U_K))  \ar[r]\ar[d]^{\partial} & 0\\
0 \ar[r] & H^2(G,K^{\times})  \ar[r]    & H^2(G,K(U)^{\times}) \ar[r]  &  H^2(G,K(U)^{\times}/K^{\times}) \ar[r] & 0}
$$
where the morphism $\partial$ is the connecting homomorphism of the following exact sequence
$$1 \to K(U)^{\times}/K^{\times} \to Div(U_K) \to Pic(U_K)\to 0$$
\medskip
$d^1(l_5,l_4)=(l_5(1,1)+l_5(1,-1),l_5(1,1)+l_4(-1,1)-l_5(-1,1)-l_4(1,1),l_4(1,1)+l_4(1,-1))\in Z^2(G,Div(U_K))$,
let $$\begin{cases}
f=\frac{1}{2}(\sqrt{m}-\sqrt{m-4a}-2\sqrt{a})xy+\sqrt{m-4a}y+(2\sqrt{a}-\sqrt{m})z-\sqrt{a}\sqrt{m-4a}x+\sqrt{m}\sqrt{m-4a}\\
g=\frac{1}{2}(-\sqrt{m}-\sqrt{m-4a}-2\sqrt{a})xy+\sqrt{m-4a}y+(2\sqrt{a}+\sqrt{m})z-\sqrt{a}\sqrt{m-4a}x-\sqrt{m}\sqrt{m-4a}
\end{cases}$$
By [1,proposition 7.1(b) and proposition 8.4],we have
$$\begin{cases}
div(f)=l_4(1,1)+l_5(-1,1)+l_1(1,-1)+l_2(-1,1)\\
div(g)=l_5(1,1)+l_4(-1,1)+l_1(1,-1)+l_2(-1,1)\\
\end{cases}$$
This implies that $div(\frac{g}{f})=l_5(1,1)+l_4(-1,1)-l_5(-1,1)-l_4(1,1)$\\
Since $div(y-\sqrt{m})=l_5(1,1)+l_5(1,-1)$, \quad $div(x-\frac{\sqrt{m}}{\sqrt{a}})=l_4(1,1)+l_4(1,-1))$, we obtain
$$ \partial((\overline l_5,\overline l_4))=(y-\sqrt{m},\frac{g}{f},x-\frac{\sqrt{m}}{\sqrt{a}}) \in Z^2(G,K(U)^{\times}/K^{\times})$$
Now we claim $(\sqrt{m}y-m,\frac{g(2\sqrt{a}-\sqrt{m}+\sqrt{m-4a})}{f(2\sqrt{a}+\sqrt{m}-\sqrt{m-4a})},x-\frac{\sqrt{m}}{\sqrt{a}})$ $\in Z^2(G,K(U)^{\times})$,
it suffices to show
$$ \begin{cases}
\sigma(\sqrt{m}y-m)=\sqrt{m}y-m\\
\tau(x-\frac{\sqrt{m}}{\sqrt{a}})=x-\frac{\sqrt{m}}{\sqrt{a}}\\
(1+\sigma)(\frac{g(2\sqrt{a}-\sqrt{m}+\sqrt{m-4a})}{f(2\sqrt{a}+\sqrt{m}-\sqrt{m-4a})})=(1-\tau)(\sqrt{m}y-m)\\
(1+\tau)(\frac{g(2\sqrt{a}-\sqrt{m}+\sqrt{m-4a})}{f(2\sqrt{a}+\sqrt{m}-\sqrt{m-4a})})=(\sigma-1)(x-\frac{\sqrt{m}}{\sqrt{a}})
\end{cases} $$
in $K(U)^{\times}$, one can directly check this by using rational point $(0,0,\sqrt{m})$ of $U_K$.The cocycle determines a non-cyclic Azumaya algebras
 $\mathcal A $ on U.
\end{exam}.\\
\begin{proposition}
 Let U be the affine scheme defined by
  $$ax^{2}+y^{2}+z^{2}-xyz=m$$
 where $a,m \in \mathbb{Z}$. Let $K=Q(\sqrt{m},\sqrt{a})$, when\\
 \begin{enumerate}[(i)]
 \item $[K:Q]=4$, $\frac{\sqrt{m-4a}}{\sqrt{ma}}\in Q$ \\
 \item for any prime q,its decomposition group in $Gal(K/Q)$ is cyclic\\
 \item There exists a prime $p \geq 5$,such that p splits in $Q(\sqrt{m})$ and has ramification index 2 in $Q(\sqrt{a})$\\
 \end{enumerate}
then there is no Brauer-Manin obstruction to the integral Hasse principle for U.
\end{proposition}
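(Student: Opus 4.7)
The plan is to show $U(A_{\BZ})^{\Br(U)} \ne \emptyset$ whenever $U(A_{\BZ}) \ne \emptyset$, so we may assume the adelic space is nonempty. Under hypothesis (i), Example 5.1 identifies $\Br_{1}(U)/\Br_{0}(U)\cong \BZ/2\oplus\BZ/4$, with $2$-torsion generated by the algebraic classes $\CB_{1}=(x-2,m-4a)$ and $\CB_{2}=(x+2,m-4a)$, and with $4$-torsion generator $\CA$ represented by the explicit bicyclic cocycle $(\sqrt{m}\,y-m,\ g(\ldots)/f(\ldots),\ x-\sqrt{m}/\sqrt{a})\in Z^{2}(G,K(U)^{\times})$ constructed at the end of Example 5.1. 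It suffices to make the global invariant sums for $\CB_{1}, \CB_{2}$, and $\CA$ vanish simultaneously, which I will achieve by adjusting the component at a single prime.

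Hypothesis (ii) tames $\CA$ at every prime. For each $q$, the decomposition group $D_{q}\subset G=\Gal(K/Q)$ is cyclic of order at most $2$, so $H^{1}(D_{q},\Pic(\overline U))$ is annihilated by $2$; therefore $\inv_{q}\CA(P_{q})\in \{0, 1/2\}$ for every local point $P_{q}$. Restricting the explicit cocycle to each of the three nontrivial cyclic subgroups $\langle\sigma\rangle, \langle\tau\rangle, \langle\sigma\tau\rangle$ of $G$ (using the formulas of Section 4 for $\langle\sigma\rangle$ and $\langle\tau\rangle$, which pick out the first and last entries lying in the quadratic subfields $Q(\sqrt m)$ and $Q(\sqrt{ma})$ respectively, plus a direct cocycle manipulation for the diagonal subgroup), one obtains a cyclic algebra whose Hilbert symbol can be compared to those of $\CB_{1}|_{Q_{q}}$ and $\CB_{2}|_{Q_{q}}$, exploiting $m-4a\equiv ma\pmod{(Q^{\times})^{2}}$ from (i). This produces a relation $\inv_{q}\CA(P_{q})=\epsilon_{q}^{(1)}\inv_{q}\CB_{1}(P_{q})+\epsilon_{q}^{(2)}\inv_{q}\CB_{2}(P_{q})+c_{q}$ at every $q$, so arranging that the global sums for $\CB_{1}, \CB_{2}$ vanish will also force that for $\CA$ to vanish.

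The flexibility at the prime $p$ of hypothesis (iii) achieves the cancellation for $\CB_{1}$ and $\CB_{2}$. Since $p$ splits in $Q(\sqrt m)$ we have $m\in (Q_{p}^{\times})^{2}$; since $p$ is ramified in $Q(\sqrt a)$ the valuation $v_{p}(a)$ is odd; combined with $(m-4a)/(ma)\in (Q^{\times})^{2}$ from (i), we obtain $m-4a\equiv a\pmod{(Q_{p}^{\times})^{2}}$, and in particular $m-4a$ is a non-square at $p$. Arguing as in the proof of Proposition 3.4, I would Hensel-lift smooth $\BF_{p}$-points of the form $(2, t, t)$ and $(p\pm 2, \ast, \ast)$ to produce four points $\mu_{1}, \ldots, \mu_{4}\in U(\BZ_{p})$ realizing all four pairs $(\inv_{p}\CB_{1}(\mu_{i}), \inv_{p}\CB_{2}(\mu_{i}))\in \{0, 1/2\}^{2}$. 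Given any $(P_{v})\in U(A_{\BZ})$, replacing $P_{p}$ by the appropriate $\mu_{i}$ simultaneously cancels $\sum_{v}\inv_{v}\CB_{1}(P_{v})$ and $\sum_{v}\inv_{v}\CB_{2}(P_{v})$, which by the previous paragraph also cancels $\sum_{v}\inv_{v}\CA(P_{v})$, establishing the claim.

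The main obstacle is the middle paragraph: the explicit restriction of the non-cyclic $2$-cocycle to each cyclic decomposition subgroup of $G$ and the identification of the resulting cyclic algebra with a combination of $\CB_{1}, \CB_{2}$ in $\Br(U_{Q_{q}})/\Br_{0}$. The representative of Example 5.1 is engineered for this because two of its three entries already lie in the quadratic subfields $Q(\sqrt m)$ and $Q(\sqrt{ma})$ fixed by $\sigma$ and $\tau$ respectively, and the compatibility relations checked in Example 5.1 via the rational point $(0,0,\sqrt m)$ make the Hilbert-symbol manipulations tractable. Once this identification is in place, the remaining verifications at $p$ are routine Hensel-lemma arguments analogous to those in the proof of Proposition 3.4.
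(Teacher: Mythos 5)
Your overall strategy (assume $U(A_{\mathbb Z})\ne\emptyset$ and adjust the component at the prime $p$ of hypothesis (iii)) is the paper's, and your use of (ii) to force $\inv_q\mathcal A(P_q)\in\{0,1/2\}$ at every prime $q$ is indeed the (implicit) role of that hypothesis. The gap is in your middle paragraph. The proposed relation $\inv_q\mathcal A(P_q)=\epsilon_q^{(1)}\inv_q\mathcal B_1(P_q)+\epsilon_q^{(2)}\inv_q\mathcal B_2(P_q)+c_q$ has coefficients and constants depending on $q$, so even if you arrange $\sum_q\inv_q\mathcal B_i(P_q)=0$ you get no control over $\sum_q\bigl(\epsilon_q^{(1)}\inv_q\mathcal B_1(P_q)+\epsilon_q^{(2)}\inv_q\mathcal B_2(P_q)+\inv_q c_q\bigr)$; and a $q$-independent relation of this shape cannot exist, since it would make $\mathcal A$ a $2$-torsion class modulo constants, contradicting that it has order $4$ in $\Br_1(U)/\Br_0(U)$. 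So killing the obstruction from the two $2$-torsion classes does not kill the obstruction from $\mathcal A$, and $\mathcal A$ must be controlled directly at $p$. That is exactly what the paper does: since $p$ splits in $Q(\sqrt m)$, the decomposition group at $p$ is $\langle\sigma\rangle$, so by the restriction formula of Section 4 (project onto the first entry of the cocycle) the evaluation of $\mathcal A$ at any $T\in U(\mathbb Z_p)$ is the \emph{cyclic} algebra $(\sqrt m\,y-m,a)_p=(\sqrt m,a)_p+(y-\sqrt m,a)_p$. The paper then proves that $T\mapsto\bigl((x+2,a)_p(T),(y-\sqrt m,a)_p(T)\bigr)$ is surjective onto $\{0,1/2\}^2$, which cancels the sums for a $2$-torsion generator and for $\mathcal A$ simultaneously.

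Your analysis at $p$ also needs repair. Hypotheses (i) and (iii) force $\mathrm{ord}_p(a)$ odd and then $p\mid m$ with $\mathrm{ord}_p(m)$ even, so in particular $p\mid a$: the hypothesis $p\nmid a$ of Proposition 3.4 and Lemma 3.3 fails, the reduction of $U$ modulo $p$ becomes $y^2+z^2-xyz\equiv 0$, and the parity argument based on $(2z-xy)^2=(x^2-4)(y^2-4a)+4(m-4a)$ no longer runs as before; the points $(p\pm 2,\ast,\ast)$ you propose do not serve here. The paper instead lifts the smooth $\mathbb F_p$-points $(2,s,s)$, $(2,t,t)$, $(\tfrac{l^2+1}{l},s,ls)$, $(\tfrac{l^2+1}{l},t,lt)$, with $(\tfrac{s}{p})=1$, $(\tfrac{t}{p})=-1$ and $l$ a prime that is a nonresidue mod $p$ with $p\nmid(l+1)$, so that $x+2=(l+1)^2/l$ detects $(x+2,a)_p$ and the residue class of $y$ detects $(y-\sqrt m,a)_p$.
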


\begin{proof}
We can assume that $U(A_\mathbb Z) \ne \emptyset$, where $A_\mathbb Z=\RR \times \prod_p \mathbb{Z}_p$ , otherwise there is nothing to prove. Note that
since p splits in $Q(\sqrt{m})$, its decomposition group is $\langle \sigma \rangle$. Hence for any $T \in U(\mathbb{Z}_p)$,
$\mathcal A(T)=(\sqrt{m}y-m,a) \in \Br(Q_p)$.By (iii), we can assume $ord_{p}(a)=1$. Note that
$$ \begin{cases}
(\sqrt{m}y-m,a)_{p}+(y-\sqrt{m},a)_{p}=(\sqrt{m},a)_{p}\\
(x+2,m-4a)_{p}=(x+2,ma)_{p}=(x+2,a)_{p}\\
\end{cases}
$$
Let $\mathcal B_1=(x+2,a)$, $\mathcal B_2=(y-\sqrt{m},a)$,to prove the proposition ,
it suffices to show for all $(\varepsilon_1,\varepsilon_2) \in (\mathbb{Z}/2\mathbb{Z})^{2}$, there exists
$\zeta \in U(\mathbb{Z}_p)$ such that
$$ (inv_{p}\mathcal B_1(\zeta),inv_{p}\mathcal B_2(\zeta))=(\varepsilon_1,\varepsilon_2).$$
Since $ord_p(a)$ is odd, we have p $\mid m$ by (i) ,in fact $ord_{p}(m)$ is even by (iii).
Let $s,t\in \Bbb F_{p}^{\times}$, such that the Legendre symbol $(\frac{s}{p})=1$, $ (\frac{t}{p})=-1$, let
$$ \upsilon_1=(2,s,s),\quad \upsilon_2=(2,t,t). $$
One can see that $\upsilon_1$ and $\upsilon_2$ are smooth points of $U(\Bbb F_{p})$, hence there exist $\mu_{1},\mu_{2} \in U(\mathbb{Z}_p)$
such that $\mu_{i}\equiv \upsilon_{i} \bmod p$ for $1 \leq i\leq 2$, we obtain
$$ \begin{cases}
(inv_{p}\mathcal B_1(\mu_1),inv_{p}\mathcal B_2(\mu_1))=(0,0)\\
(inv_{p}\mathcal B_1(\mu_2),inv_{p}\mathcal B_2(\mu_2))=(0,\frac{1}{2})\\
\end{cases}
$$
By Dirichlet theorem,there exist a prime $l$,such that Legendre symbol $(\frac{l}{p})=-1$ and $p \nmid (l+1)$. Let
$$\upsilon_3=(\frac{l^{2}+1}{l},s,ls),\quad \upsilon_4=(\frac{l^{2}+1}{l},t,lt). $$
One can check that $\upsilon_3$ and $\upsilon_4$ are smooth points of $U(\Bbb F_{p})$, hence there exist $\mu_{3},\mu_{4} \in U(\mathbb{Z}_p)$
such that $\mu_{i}\equiv \upsilon_{i} \bmod p$ for $3 \leq i\leq 4$, we obtain
$$ \begin{cases}
(inv_{p}\mathcal B_1(\mu_3),inv_{p}\mathcal B_2(\mu_3))=(\frac{1}{2},0)\\
(inv_{p}\mathcal B_1(\mu_4),inv_{p}\mathcal B_2(\mu_4))=(\frac{1}{2},\frac{1}{2})\\
\end{cases}
$$
The proposition is established.
\end{proof}

\subsection*{Acknowledgements}
I thank my advisor Fei Xu for countless helpful conversations and suggestions, and also for his patience.  I also thank Yang Zhang for many useful discussions.

\newpage

\bibliographystyle{alpha}
\end{document}